%%%%%%%%%%%%%%%%%%%%%%%%%%%%%%%%%%%%%%%%%%%
%%
%%      Isomorphisms of Lattices of Bures-Closed Bimodules 
%%                              over Cartan MASAs
%%
%%                                         by
%%                  Adam H. Fuller and David R. Pitts
%%
%%                  Submitted to New York Journal of Mathematics
%%                       March 1, 2013
%%
%%%%%%%%%%%%%%%%%%%%%%%%%%%%%%%%%%%%%%%%%%%%

%\documentclass[reqno]{nyjm} 
\documentclass[reqno]{amsart}
\usepackage{amssymb,amsmath,verbatim,amstext,eucal}
\usepackage[all]{xy}

\newtheorem{lemma}{Lemma}[section] 
 
\newtheorem{proposition}[lemma]{Proposition}
\newtheorem{theorem}[lemma]{Theorem}

\newtheorem{corollary}[lemma]{Corollary}

\newcommand{\FLEX}{\relax}
\newcommand{\flex}[1]{\renewcommand{\FLEX}{#1}}
\newtheorem{flexthm}[lemma]{\FLEX}

\newtheorem{prop}[lemma]{Proposition}

\newtheorem{cor}[lemma]{Corollary}

\theoremstyle{definition}
\newtheorem{definition}[lemma]{Definition}

\newenvironment{remark}[1]{\refstepcounter{lemma}%
\vskip 5pt \par\noindent {\bf #1\ \thelemma .}}{\vskip 5pt \par}

\newenvironment{remark*}[1]{\par \vskip 5pt \noindent 
{\bf #1.}}{\vskip 5pt \par}

\newlength{\dqlength}

\newcommand{\bh}{\ensuremath{{\mathcal B}({\mathcal H})}}

\newcommand{\cstar}{\hbox{$C^*$}}
\newcommand{\cstaralg}{$C^*$-algebra}

\newcommand{\dstext}[1]{\quad\text{#1}\quad}

\newcommand{\mean}{\mathop{
     \mathchoice{\vcenter{\hbox{\huge
           $\Lambda$}}}{\Lambda}{\Lambda}{\Lambda}}
     \displaylimits}

\newcommand{\norm}[1]{\left\|{#1}\right\|}

\providecommand{\qed}%
{\hfill \vrule height5pt width4pt depth1pt \vspace{+2.00ex}}

\newcommand{\ran}{\operatorname{range}}

\newcommand{\spn}{\operatorname{span}}
\newcommand{\supp}{\operatorname{supp}}

\newcommand{\bbC}{{\mathbb{C}}}

\newcommand{\bbN}{{\mathbb{N}}}

  \newcommand{\A}{{\mathcal{A}}}
  
  \newcommand{\B}{{\mathcal{B}}}

  \newcommand{\D}{{\mathcal{D}}}

  \newcommand{\G}{{\mathcal{G}}}
\renewcommand{\H}{{\mathcal{H}}}

  \newcommand{\M}{{\mathcal{M}}}
  \newcommand{\N}{{\mathcal{N}}}

\renewcommand{\S}{{\mathcal{S}}}
  
  \newcommand{\U}{{\mathcal{U}}}
  
  \newcommand{\W}{{\mathcal{W}}}

  \newcommand{\Z}{{\mathcal{Z}}}

\newcommand{\fH}{{\mathfrak{H}}}
\newcommand{\fI}{{\mathfrak{I}}}

\newcommand{\fP}{{\mathfrak{P}}}

\newcommand{\fS}{{\mathfrak{S}}}

 \newcommand{\pil}{\pi_\ell{}}
\newcommand{\pir}{\pi_r{}} \newcommand{\fn}{\mathfrak{n}}

\newcommand{\bimod}{\operatorname{bimod}}
\newcommand{\atom}{\operatorname{atom}}

\newcommand{\size}{\operatorname{card}}
\providecommand{\Zbl}[1]{#1.}

\begin{document}

\title[Lattices of Bures Bimodules]{Isomorphisms of Lattices of Bures-Closed Bimodules over Cartan
MASAs} \author[A. H. Fuller]{Adam H. Fuller} \address{Dept. of
Mathematics\\ University of Nebraska-Lincoln\\ Lincoln, NE\\
68588-0130 } \email{afuller7@math.unl.edu} \author[D.R. Pitts]{David
R. Pitts} \address{Dept. of Mathematics\\ University of
Nebraska-Lincoln\\ Lincoln, NE\\ 68588-0130}
\email{dpitts2@math.unl.edu}

\keywords{Bimodule, Cartan MASA} \subjclass[2010]{Primary 46L10,
  Secondary 46L51}
\begin{abstract} For $i=1,2$, let $(\M_i,\D_i)$ be pairs consisting of
a Cartan MASA $\D_i$ in a von Neumann algebra $\M_i$, let
$\atom(\D_i)$ be the set of atoms of $\D_i$, and let $\fS_i$ be the
lattice of Bures-closed $\D_i$ bimodules in $\M_i$.  We show that when
$\M_i$ have separable preduals, there is a lattice isomorphism between
$\fS_1$ and $\fS_2$ if and only if the sets $\{(Q_1, Q_2)\in
\atom(\D_i)\times \atom(\D_i): Q_1\M_i Q_2\neq (0)\}$ have the same
cardinality.  In particular, when $\D_i$ is non-atomic, $\fS_i$ is
isomorphic to the lattice of projections in $L^\infty([0,1],m)$ where
$m$ is Lebesgue measure, regardless of the isomorphism classes of
$\M_1$ and $\M_2$.

\end{abstract}

\maketitle

\vskip 12pt

\section{Introduction} Let $\M$ be a von Neumann algebra containing a
Cartan MASA $\D$; we call $(\M,\D)$ a Cartan pair.  Feldman and
Moore~\cite{FeldmanMooreErEqReI, FeldmanMooreErEqReII} gave a
construction of Cartan pairs with separable preduals based on Borel
measurable equivalence relations and showed that all (separably
acting) Cartan pairs arise from their construction.  Building on the
work of Feldman and Moore \cite{FeldmanMooreErEqReI,
  FeldmanMooreErEqReII}, and Arveson \cite{ArvesonOpAlInSu}, Muhly,
Solel and Saito \cite{MuhlySaitoSolelCoTrOpAl} introduced the Spectral
Theorem for Bimodules. They claimed that if $\S$ is a $\sigma$-weakly
closed $\D$-bimodule of $\M$, then there is a Borel subset $\B$ of the
Feldman-Moore relation $R$ such that $\S$ consists of all those
operators in $\M$ whose ``matrices'' are supported in $\B$. That is, the
$\sigma$-weakly closed $\D$-bimodule $\S$ is determined precisely by
its support $\B$.

Unfortunately, there is a gap in the proof of the Spectral Theorem for
Bimodules; consult the ``added in proof'' portion of Aoi's paper
\cite{AoiCoEqSuInSu} for details.  While we are not aware of any complete
proof of the Spectral Theorem for Bimodules, Fulman
\cite{FulmanCrPrvNAlEqReThSu} has established it when $\M$ is
hyperfinite and $\M_*$ is separable.

In a recent paper, Cameron, the second author and Zarikian
\cite{CameronPittsZarikianCaMASAvNAlNoNePrMeTh}, introduced a new
perspective to the study of $\D$-bimodules in a Cartan pair $(\M,
\D)$. The approach in \cite{CameronPittsZarikianCaMASAvNAlNoNePrMeTh}
is operator theoretic and avoids the measure theoretic tools of
Feldman and Moore. In \cite{CameronPittsZarikianCaMASAvNAlNoNePrMeTh},
a version of the Spectral Theorem for
Bimodules is proved, not for $\sigma$-weakly closed bimodules but for
\emph{Bures}-closed $\D$-bimodules. In fact, it is shown that the
Spectral Theorem for Bimodules as introduced in
\cite{MuhlySaitoSolelCoTrOpAl} is true if and only if every
$\sigma$-weakly closed $\D$-bimodule is itself Bures-closed.

In this paper, we continue the study of the Bures-closed
$\D$-bimodules in a Cartan pair $(\M,\D)$. Our main result,
Theorem~\ref{samebbimod}, shows that the lattice of Bures-closed
bimodules for a separably acting Cartan pair $(\M,\D)$ depends upon:
i) whether $\D$ contains a diffuse part, and ii) the cardinality of
the restriction of 
the Murray-von Neumann equivalence relation for projections of $\M$ to
 the atoms of $\D$.  In this sense, the lattice of Bures-closed bimodules
depends suprisingly little on the Cartan pair $(\M,\D)$, when $\M$ is
separably acting. In particular,
if $(\M_1, \D_1)$ and $(\M_2, \D_2)$ are any two Cartan pairs in which
$\D_1$ and $\D_2$ are separably acting diffuse algebras, then they
share the same lattice structure of Bures-closed $\D$-bimodules.

Along the way, in Section~\ref{sec: proj}, we give a fuller
description of the supports of partial isometry normalizers of $\D$.
In particular, we describe a pre-order on $\G\N(\M,\D)$, the set 
consisting  of all 
partial isometry normalizers of $\D$, which is induced by their supports.

\section{Background and Preliminaries}\label{sec: prelim}

Let $\M$ be a von Neumann algebra. A MASA (maximal abelian
self-adjoint subalgebra) $\D$ in $\M$ is \emph{Cartan} if
\begin{enumerate}
 \item there is a faithful, normal conditional expectation
$E:\M\rightarrow\D$, and
 \item $\spn\{U\in\M:\ U\text{ is unitary and }U\D U^*=\D\}$ is
$\sigma$-weakly dense in $\M$.
\end{enumerate} If $\D$ is Cartan in $\M$ we call $(\M, \D)$ a
\emph{Cartan pair}. The set of \emph{normalizers} for $\D$ is the set
\begin{equation*} \N(\M, \D)=\{v\in\M: v^*\D v\cup v\D
v^*\subseteq\D\}.
\end{equation*} The \emph{groupoid normalizers}, denoted
$\G\N(\M,\D)$, are the elements of $\N(\M,\D)$ which are partial
isometries. Clearly, $\N(\M,\D)$ and $\G\N(\M,\D)$ are $\sigma$-weakly
dense in $\M$ when $(\M,\D)$ is a Cartan pair.

\begin{remark}{Notation} For any abelian von Neumann algebra $\W$,
$\atom(\W)$ will denote the set of atoms in $\W$.  Let $(\M,\D)$ be a
Cartan pair, and let $R_a$ be the restriction of the Murray-von
Neumann equivalence relation on projections of $\M$ to $\atom(\D)$.
For $A_1, A_2\in\atom(\D)$ write $A_1\sim A_2$ when $(A_1, A_2)\in
R_a$.
\end{remark}

Notice that if $v\in\M$ is a partial isometry such that $v^*v,
vv^*\in\atom(\D)$, then $v\in \G\N(\M,\D)$.  Indeed, for $d\in \D$,
$dv^*v\in\bbC v^*v$, so $vdv^*= vdv^*vv^*\in \bbC vv^*\subseteq \D$.
Likewise, $v^*dv\in\D$.

\begin{proposition}\label{summand} Let $(\M,\D)$ be a Cartan pair, and
set $X:=\sum_{Q\in\atom(\D)} Q$.  Then $X$ is a central projection of
$\M$.
\end{proposition}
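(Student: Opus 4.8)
The plan is to show that $X$ is fixed by every unitary $U \in \M$ normalizing $\D$, and then invoke $\sigma$-weak density of the normalizers to conclude $X$ is central. Since $\G\N(\M,\D)$ is $\sigma$-weakly dense in $\M$ and spans a dense subalgebra, it suffices to prove $UXU^* = X$ for every unitary $U$ with $U\D U^* = \D$; taking $\sigma$-weak limits and linear combinations then gives $YX = XY$ for all $Y \in \M$, i.e. $X \in \M \cap \M' \supseteq Z(\M)$, hence $X$ is a central projection (it is clearly a projection, being a $\sigma$-weakly convergent sum of mutually orthogonal projections in the abelian algebra $\D$). So the whole proposition reduces to one invariance statement.

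To prove $UXU^* = X$: conjugation by $U$ is a $*$-automorphism of $\D$ (since $U\D U^* = \D$), so it permutes $\atom(\D)$. Indeed, if $Q \in \atom(\D)$ then $UQU^*$ is a projection in $\D$, and it is minimal in $\D$ because $\D \ni R \le UQU^*$ pulls back to $U^*RU \le Q$ in $\D$, forcing $U^*RU \in \{0, Q\}$ and hence $R \in \{0, UQU^*\}$. Thus $Q \mapsto UQU^*$ is a bijection of $\atom(\D)$ onto itself. Since $X = \sum_{Q \in \atom(\D)} Q$ with the sum taken $\sigma$-weakly over a set of pairwise orthogonal projections, and $\sigma$-weak convergence of such sums is preserved under the $\sigma$-weakly continuous map $T \mapsto UTU^*$, we get $UXU^* = \sum_{Q \in \atom(\D)} UQU^* = \sum_{Q' \in \atom(\D)} Q' = X$, the reindexing being legitimate precisely because conjugation by $U$ permutes the index set.

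The only genuinely delicate point is the interchange of $U(\cdot)U^*$ with the (possibly infinite, possibly uncountable) sum $\sum_{Q} Q$. This is handled by noting that for a family of pairwise orthogonal projections, the net of finite partial sums is increasing and bounded, hence $\sigma$-weakly (indeed $\sigma$-strongly) convergent to its supremum; conjugation by the fixed unitary $U$ carries this net to the net of finite partial sums of the $UQU^*$, which is the corresponding net for the permuted family, and $\sigma$-weak continuity of $T \mapsto UTU^*$ identifies the two limits. No measure theory or structure theory of $\M$ is needed — only that $\atom(\D)$ is a set of pairwise orthogonal projections permuted by each normalizing unitary, together with $\sigma$-weak density of such unitaries in $\M$.
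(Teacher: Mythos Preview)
Your proof is correct and follows essentially the same approach as the paper: show that conjugation by a unitary normalizer permutes $\atom(\D)$, hence fixes $X$, and then use $\sigma$-weak density of the span of unitary normalizers to conclude $X$ is central. You have simply supplied more detail (the justification that $Q\mapsto UQU^*$ is a bijection of atoms, and the interchange of conjugation with the $\sigma$-weak sum) than the paper's one-paragraph argument; one cosmetic slip is the line ``$X\in\M\cap\M'\supseteq Z(\M)$,'' which should read $\M\cap\M'=Z(\M)$.
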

\begin{proof} Let $U\in \G\N(\M,\D)$ be a unitary normalizer.  For
each $Q\in\atom(\D)$, $UQU^*\in\atom(\D)$, and the map $Q\mapsto
UQU^*$ is a permutation of $\atom(\D)$.  Hence $UXU^*=X$, and hence
$X$ commutes with $U$.  As $\M$ is generated by its unitary
normalizers, $X$ is in the center of $\M$.
\end{proof}

Thus, any Cartan pair decomposes as a direct sum of two Cartan pairs,
$(\M,\D)=(\M_c,\D_c)\oplus (\M_a,\D_a)$, where $(\M_c,\D_c)=(\M
X^\perp,\D X^\perp)$ and $(\M_a,\D_a)=(\M X,\D X)$.  Clearly,
$\atom(\D_c)=\emptyset$ and $\D_a$ is generated by its atoms.  We
shall call $(\M_c,\D_c)$ and $(\M_a,\D_a)$ the \textit{continuous} and
\textit{atomic} parts of $(\M,\D)$ respectively.

Henceforth, let $(\M,\D)$ be a Cartan pair with conditional
expectation $E$.  Fix a faithful normal semi-finite weight $\phi$ on
$\M$ such that $\phi\circ E=\phi$. We shall freely use notation from
\cite{TakesakiThOpAlII} (see pages 41-42 for  discussion  of
$\fn_\phi:=\{x\in\M: \phi(x^*x)<\infty\}$, 
Definition VII.1.5 for a discussion of the semi-cyclic
representation $(\pi_\phi,\fH_\phi,\eta_\phi)$, etc.).  The following
follows from the fact that $\phi\circ E=\phi$; details of the proof
are left for the reader.

\begin{lemma} \label{nphibi} With this notation, $\fn_\phi$ and
$\fn_\phi^*$ are $\D$-bimodules, and for $d\in \D$, $x\in
\fn_\phi$, and $y\in\fn_\phi^*$, we have
\begin{align*}\max\{\phi((dx)^*(dx)),\phi((xd)^*(xd))\}&\leq
\norm{d}^2\phi(x^*x)\dstext{and}\\
\max\{\phi((dy^*)^*(dy^*)),\phi((y^*d)^*(y^*d))\}&\leq
\norm{d}^2\phi(yy^*).
\end{align*}
\end{lemma}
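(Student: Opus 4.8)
The statement is that $\fn_\phi$ and $\fn_\phi^*$ are $\D$-bimodules, together with the norm estimates. The key tool is the hypothesis $\phi \circ E = \phi$, which forces $\phi$ to be $\D$-invariant in a strong sense: for $d \in \D$ and $x \in \M^+$, one has $\phi(d^* x d) = \phi(E(d^* x d)) = \phi(d^* E(x) d)$ since $E$ is a $\D$-bimodule map, and hence (using that $\D$ is abelian and $E(x) \geq 0$) $\phi(d^* x d) = \phi(d^* d\, E(x)) \le \|d\|^2 \phi(E(x)) = \|d\|^2 \phi(x)$. Actually the cleaner route is: $\phi(d^* x d) = \phi(E(d^*xd)) = \phi(d^* E(x) d) \le \|d\|^2 \phi(E(x)) = \|d\|^2 \phi(x)$, where the inequality uses $0 \le d^* E(x) d \le \|d\|^2 E(x)$ in the commutative algebra $\D$ together with normality and positivity of $\phi$. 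This single computation, applied with appropriate choices, yields everything.

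First I would record the basic fact just sketched: for all $d \in \D$ and all $a \in \M$,
\[
\phi\bigl((ad)^*(ad)\bigr) = \phi\bigl(d^* a^* a\, d\bigr) = \phi\bigl(d^* E(a^*a)\, d\bigr) \le \|d\|^2 \phi(a^*a),
\]
and similarly $\phi\bigl((da)^*(da)\bigr) = \phi\bigl(a^* d^* d\, a\bigr) = \phi\bigl(E(a^* d^* d\, a)\bigr)$; here I would use $\phi \circ E = \phi$ in the form $\phi(y) = \phi(E(y))$ for $y \in \M^+$, then bound $E(a^* d^* d\, a) \le \|d\|^2 E(a^* a)$ (this holds because conditional expectations are completely positive, hence $E(a^* (\|d\|^2 - d^*d) a) \ge 0$), giving $\phi\bigl((da)^*(da)\bigr) \le \|d\|^2 \phi(E(a^*a)) = \|d\|^2 \phi(a^*a)$. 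Taking $a = x \in \fn_\phi$ shows $dx, xd \in \fn_\phi$ with the stated bounds; taking $a = y^* $ for $y \in \fn_\phi^*$ (so $a \in \fn_\phi$, i.e. $\phi(a^*a) = \phi(yy^*) < \infty$) and translating back via $(dy^*)^* = yd^*$, $(y^*d)^* = d^* y$ gives that $dy^*, y^*d \in \fn_\phi$, equivalently $\fn_\phi^*$ is closed under left and right multiplication by $\D$, with the second displayed inequality. Since $\D$ is self-adjoint, closure under multiplication by $d$ and by $d^*$ are the same family of statements, so the bimodule claim follows once the inequalities are in hand.

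The only mild subtlety — not really an obstacle — is bookkeeping with adjoints and making sure the finiteness of $\phi$ on the relevant positive elements is what lets us conclude membership in $\fn_\phi$ or $\fn_\phi^*$; everything reduces to the one inequality $\phi(d^* z d) \le \|d\|^2 \phi(z)$ for $z \in \M^+$, which is immediate from $\phi \circ E = \phi$, $E$ being a positive $\D$-bimodule projection, and $\D$ being commutative. There is no delicate point requiring the semi-cyclic representation or any approximation argument; the author is right to leave the details to the reader.
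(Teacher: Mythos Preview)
Your argument is correct and is exactly the computation the paper has in mind: the authors omit the proof entirely, noting only that it ``follows from the fact that $\phi\circ E=\phi$,'' and your use of $\phi(d^*a^*ad)=\phi(d^*E(a^*a)d)\le\|d\|^2\phi(E(a^*a))=\|d\|^2\phi(a^*a)$ (via the $\D$-bimodule property of $E$ and commutativity of $\D$) is precisely how that hint unpacks. The bookkeeping with $a=x$ and $a=y^*$ is handled correctly.
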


\begin{definition}
Modifying~\cite[Definition~IX.1.13]{TakesakiThOpAlII} very slightly,
we will say that a quadruple $\{\pi,\fH, J,\fP\}$ is a
\textit{standard form for $\M$} if $\pi$ is a faithful normal
representation of $\M$ on $\fH_\pi$ and $\{\pi(\M),\fH,J,\fP\}$ is a
standard form for $\pi(\M)$ as in
\cite[Definition~IX.1.13]{TakesakiThOpAlII}.  Due to the uniqueness of
the standard form (see \cite[Theorem~IX.1.14]{TakesakiThOpAlII}), we
may, and sometimes will, assume without loss of generality that
$\{\pi(\M),\fH,J,\fP\}=\{\pi_\phi(\M),\fH_\phi, J_\phi, \fP_\phi\}$,
where $\phi$ is a faithful, semi-finite, normal weight on $\M$ such
that $\phi\circ E=\phi$.

When $(\M,\D)$ is a Cartan pair and $\{\pi,\fH,J,\fP\}$ is a standard
form for $\M$, define representations $\pil$ and $\pir$ of $\D$ on
$\fH_\pi$ by
\begin{equation}\label{pildef}
\pil(d)=\pi(d)\dstext{and}\pir(d)=J\pi(d^*)J,
\end{equation} and set
\begin{equation}\label{genZ} \Z=(\pil(\D)\cup\pir(\D))''.
\end{equation}
\end{definition}

The purpose of the following is to observe that $\Z$ is uniquely
determined.  The proof is an immediate consequence of
\cite[Theorem~1.4.7]{CameronPittsZarikianCaMASAvNAlNoNePrMeTh} 
and \cite[Theorem~IX.1.14]{TakesakiThOpAlII}.

\begin{proposition}\label{sameZ} Let $(\M,\D)$ be a Cartan pair.  For
$i=1,2$, suppose $\{\pi_i, \fH_i, J_i, \fP_i\}$ are standard forms for
$\M$, and let $\pir_i$ and $\Z_i$ be as in~\eqref{pildef}
and~\eqref{genZ}.  Then there exists a unique unitary operator
$U\in\B(\fH_1,\fH_2)$ such that
\begin{enumerate}
\item $\pi_2(x)=U\pi_1(x)U^*,\dstext{for all $x\in\M;$}$
\item $J_2=UJ_1U^*$;
\item $\fP_2=U\fP_1$;
\item $\pir_2(d)=U\pir_1(d)U^*,\quad d\in \D;$
\item $\Z_i$ is a MASA in $\B(\fH_i)$; and
\item $\Z_1\ni z\mapsto UzU^*$ is an isomorphism of $\Z_1$ onto
$\Z_2.$
\end{enumerate}
\end{proposition}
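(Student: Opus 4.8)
The plan is to deduce everything from the uniqueness of the standard form, \cite[Theorem~IX.1.14]{TakesakiThOpAlII}, together with \cite[Theorem~1.4.7]{CameronPittsZarikianCaMASAvNAlNoNePrMeTh}. Recall that Takesaki's uniqueness theorem says: given two standard forms $\{\pi_1(\M),\fH_1,J_1,\fP_1\}$ and $\{\pi_2(\M),\fH_2,J_2,\fP_2\}$ for $\M$ (here I use the faithful representations $\pi_i$ to identify copies of $\M$), there is a \emph{unique} unitary $U\in\B(\fH_1,\fH_2)$ with $U\pi_1(x)U^*=\pi_2(x)$ for all $x\in\M$, $UJ_1U^*=J_2$, and $U\fP_1=\fP_2$. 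This immediately gives (1), (2), (3), and the uniqueness of $U$ — no work required beyond citing the theorem. So the only content is to show that this same $U$ intertwines $\pir_1$ with $\pir_2$, that each $\Z_i$ is a MASA, and that $U$ carries $\Z_1$ onto $\Z_2$.

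For (4): by definition $\pir_i(d)=J_i\pi_i(d^*)J_i$. Using (1) and (2),
\begin{align*}
U\pir_1(d)U^* &= U J_1 \pi_1(d^*) J_1 U^* \\
&= (UJ_1U^*)(U\pi_1(d^*)U^*)(UJ_1U^*) \\
&= J_2\,\pi_2(d^*)\,J_2 = \pir_2(d),
\end{align*}
which is just a one-line conjugation computation; the key point is that $U$ intertwines both $\pi_i$ and $J_i$, so it automatically intertwines anything built algebraically from them. Since also $\pil_i(d)=\pi_i(d)$, it follows that $U(\pil_1(\D)\cup\pir_1(\D))U^*=\pil_2(\D)\cup\pir_2(\D)$, and hence $U\Z_1U^*=\Z_2$ by taking double commutants (conjugation by a unitary commutes with the double-commutant operation). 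That is statement (6), and it is an immediate corollary of (1), (2), (4).

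For (5): this is where the substantive input \cite[Theorem~1.4.7]{CameronPittsZarikianCaMASAvNAlNoNePrMeTh} enters. That theorem (in the Cartan setting, applied in any standard form — in particular we may by the remark following the definition take $\{\pi_i,\fH_i,J_i,\fP_i\}=\{\pi_\phi(\M),\fH_\phi,J_\phi,\fP_\phi\}$ for a weight $\phi$ with $\phi\circ E=\phi$) asserts precisely that $\Z:=(\pil(\D)\cup\pir(\D))''$ is a MASA in $\B(\fH_\phi)$. So $\Z_i$ is a MASA in $\B(\fH_i)$ for the particular standard form coming from $\phi$; but then by (6) applied with one of the two forms being the $\phi$-form, being a MASA transfers to the other, so $\Z_i$ is a MASA in $\B(\fH_i)$ for every standard form. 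This proves (5), and with it the proposition.

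I do not anticipate a genuine obstacle here: the whole statement is a repackaging of two cited theorems, and the only thing to be careful about is the logical order — one must use the $\phi$-standard form as a "base point" to import the MASA property from \cite{CameronPittsZarikianCaMASAvNAlNoNePrMeTh} before transporting it to an arbitrary standard form via the uniqueness unitary. If I wanted to be slightly more self-contained I would note that to prove $\Z_1$ is a MASA it suffices (by (6)) to prove $\Z_2$ is, so WLOG one of the two standard forms is the semicyclic one, and apply \cite[Theorem~1.4.7]{CameronPittsZarikianCaMASAvNAlNoNePrMeTh} there.
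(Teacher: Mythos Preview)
Your proposal is correct and matches the paper's own argument: the paper simply states that the proposition is an immediate consequence of \cite[Theorem~IX.1.14]{TakesakiThOpAlII} and \cite[Theorem~1.4.7]{CameronPittsZarikianCaMASAvNAlNoNePrMeTh}, which is exactly the route you take. Your unpacking of how (4)--(6) follow from (1)--(3) via conjugation, and your observation that one may reduce to the semicyclic form to invoke the MASA result, is precisely the intended content behind the word ``immediate.''
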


\subsection{Bimodules} The \emph{Bures topology}, see
\cite{BuresAbSuvNAl}, on $\M$ is the locally convex topology generated
by the family of seminorms
\begin{equation*} \{ T\mapsto\sqrt{\tau(E(T^*T))}:\ \tau\in(\D_*)^+\}.
\end{equation*} In this note we are primarily interested in the
Bures-closed $\D$-bimodules in $\M$. When the Cartan MASA is
understood, we will sometimes simply say ``bimodule'' in place of
``$\D$-bimodule.''  Any Bures-closed $\D$-bimodule is necessarily
$\sigma$-weakly closed.

It is shown in
\cite[Theorem~2.5.1]{CameronPittsZarikianCaMASAvNAlNoNePrMeTh} that if
$\S\subseteq \M$ is a non-zero Bures-closed $\D$-bimodule, then
$\S\cap\G\N(\M,\D)$ generates $\S$ as a Bures-closed bimodule.  We
will make frequent use of this fact.

Given a $\sigma$-weakly closed $\D$-bimodule $\S$ in $\M$, the
\emph{support of $\S$}, denoted $\supp(S)$, is the orthogonal
projection onto the $\Z$-invariant subspace,
$\overline{\pi_\phi(\S)\eta_\phi(\fn_\phi\cap\D)}$; as $\Z$ is a MASA
in $\B(\fH)$, $\supp(S)$ is a projection in $\Z$.  For an operator
$T\in\M$, define $\supp(T)$ to be the support of the Bures-closed
bimodule generated by $T$. The definition of the support of a bimodule
given here is as introduced in
\cite{CameronPittsZarikianCaMASAvNAlNoNePrMeTh}. The original concept
was introduced in \cite{MuhlySaitoSolelCoTrOpAl}.

For a partial isometry $w\in\G\N(\M, \D)$ we denote $\supp(w)$ by
$P_w$. Picking and choosing results from
\cite{CameronPittsZarikianCaMASAvNAlNoNePrMeTh} we have the following
alternative descriptions of $P_w$.

\begin{lemma}[{\cite[Lemma~1.4.6 and 
    Lemma~2.1.3]{CameronPittsZarikianCaMASAvNAlNoNePrMeTh}}] Given any
    $w\in\G\N(\M,\D)$ the following hold.
  \begin{enumerate}
  \item Let $\mean$ be an invariant mean on the (discrete) group of
unitaries in $\D$, $\U(\D)$ (which we may assume to satisfy
$\mean_{U\in \U(\D)} f(U) =\mean_{U\in\U(\D)} f(U^*)$ for every
$f\in\ell^\infty(\U(\D))$.  Then
    \begin{equation*} P_w=\mean_{U\in\U(\D)}\pil(wUw^*)\pir(U^*).
    \end{equation*}
  \item $P_w$ is the orthogonal projection onto
$\overline{\{\eta_\phi(wd):\ d\in\fn_\phi\cap\D\}}$, and for
$x\in\fn_\phi$,
    \begin{equation*} P_w\eta_\phi(x)=\eta_\phi(wE(w^*x)).
    \end{equation*}
  \end{enumerate}
\end{lemma}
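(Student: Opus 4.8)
The plan is to exhibit one projection in $\Z$ that realizes all three descriptions of $P_w$ simultaneously. Write $\S_w$ for the Bures-closed $\D$-bimodule generated by $w$. Since $w$ normalizes $\D$, one has $wd=(wdw^*)w$ and $dw=w(w^*dw)$, so the algebraic bimodule $\D w\D$ equals $\D w=w\D$; and since left and right multiplication by a fixed element of $\D$, as well as $E$ itself, are Bures-contractions (Kadison--Schwarz together with $\phi\circ E=\phi$), the Bures closure of a bimodule is again a bimodule, so $\S_w=\overline{\D w}^{\,\textsc{Bures}}$ and $P_w=\supp(\S_w)$ is the orthogonal projection onto $\mathcal M_w:=\overline{\pi_\phi(\S_w)\eta_\phi(\fn_\phi\cap\D)}$. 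Let $E_\D\in\B(\fH_\phi)$ be the orthogonal projection onto $\overline{\eta_\phi(\fn_\phi\cap\D)}$. First I would verify $E_\D\eta_\phi(y)=\eta_\phi(E(y))$ for $y\in\fn_\phi$: the assignment $\eta_\phi(y)\mapsto\eta_\phi(E(y))$ is a well-defined contraction (as $\phi(E(y)^*E(y))\le\phi(E(y^*y))=\phi(y^*y)$), it is idempotent, and it is self-adjoint since $\phi(z^*E(y))=\phi(E(z^*)E(y))=\phi(E(z^*)y)$ for $y,z\in\fn_\phi$; its range is $\overline{\eta_\phi(\fn_\phi\cap\D)}$.

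Next I would set $P:=\pi_\phi(w)\,E_\D\,\pi_\phi(w^*)$. Because $w^*w\in\D$ and the selfadjoint algebra $\pil(\D)$ leaves $\overline{\eta_\phi(\fn_\phi\cap\D)}$ invariant, $\pi_\phi(w^*w)$ commutes with $E_\D$, and from this one checks that $P^2=P=P^*$, that $P$ has range $\overline{\{\eta_\phi(wd):d\in\fn_\phi\cap\D\}}=:\K$, and that $P\eta_\phi(x)=\pi_\phi(w)\eta_\phi(E(w^*x))=\eta_\phi(wE(w^*x))$ for $x\in\fn_\phi$. It then remains to prove $\K=\mathcal M_w$. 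The inclusion $\K\subseteq\mathcal M_w$ is immediate from $w\in\S_w$. For the reverse, I would show that every $s\in\S_w$ satisfies $s=wE(w^*s)$: for $s=dw$ with $d\in\D$ this reads $wE(w^*dw)=ww^*dw=dw$ (using $w^*dw\in\D$ and $ww^*d=dww^*$), and $s\mapsto wE(w^*s)$ is Bures-continuous, so $\{s:wE(w^*s)=s\}$ is Bures-closed and contains $\D w$, hence all of $\S_w$. Then for $s\in\S_w$ and $d\in\fn_\phi\cap\D$ one gets $\pi_\phi(s)\eta_\phi(d)=\eta_\phi(sd)=\eta_\phi\big(wE(w^*(sd))\big)=P\eta_\phi(sd)\in\K$, so $\mathcal M_w\subseteq\K$. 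This proves part (2), with $P_w=P$.

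For part (1), put $Q:=\mean_{U\in\U(\D)}\pil(wUw^*)\pir(U^*)$. Since $wUw^*\in\D$, each summand lies in $\pil(\D)\cup\pir(\D)\subseteq\Z$, so $Q\in\Z$. Using $(wUw^*)(wVw^*)=w(UV)w^*$ (valid because $w^*w$ is a projection in the abelian algebra $\D$) and the commutation of $\pil(\D)$ with $\pir(\D)$, the map $U\mapsto\pil(wUw^*)\pir(U^*)$ is a representation of $\U(\D)$ by partial isometries with common range projection $\pil(ww^*)$, so $Q$ is the orthogonal projection onto its fixed-point space; in particular $Q$ is a projection. To identify $Q$ with $P$ I would compare matrix coefficients on the dense set $\eta_\phi(\fn_\phi)$. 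Since $\phi\circ E=\phi$ forces $\D\subseteq\M^\phi$, one has $\pir(U^*)\eta_\phi(x)=\eta_\phi(xU^*)$ for $x\in\fn_\phi$ (the right action of a centralizer element), whence $\langle Q\eta_\phi(x),\eta_\phi(y)\rangle=\mean_U\phi\big(y^*\,w\,U(w^*x)U^*\big)$. Now apply the averaging identity $\mean_{U\in\U(\D)}UaU^*=E(a)$ (holding $\sigma$-weakly: writing $F(a):=\mean_U UaU^*$, invariance of the mean forces $F(a)\in\D'\cap\M=\D$, and normality and $\D$-bimodularity of $E$ then give $F(a)=E(F(a))=\mean_U E(UaU^*)=E(a)$). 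Feeding this in, $\langle Q\eta_\phi(x),\eta_\phi(y)\rangle=\phi\big(y^*wE(w^*x)\big)=\langle P\eta_\phi(x),\eta_\phi(y)\rangle$, so $Q=P=P_w$, which is part (1).

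The two places I expect to be genuinely delicate are the following. First, the reverse inclusion $\mathcal M_w\subseteq\K$ in part (2): one must confirm that $\S_w$ is exactly the Bures closure of the concrete bimodule $\D w$, and that each of the three maps composing $s\mapsto wE(w^*s)$ is a Bures-contraction --- this is precisely where the hypothesis $\phi\circ E=\phi$ enters, via Kadison--Schwarz. Second, in part (1), pairing the $\sigma$-weak averaging identity with the (possibly unbounded) weight $\phi$: this works because $\eta_\phi$ is $\sigma$-weakly--weakly continuous on bounded subsets of $\fn_\phi$, and the $\|\cdot\|_2$-ball containing all the $U(w^*x)U^*$ and $E(w^*x)$ is $\sigma$-weakly closed, so $z\mapsto\langle\eta_\phi(z),\eta_\phi(w^*y)\rangle=\phi(y^*wz)$ is $\sigma$-weakly continuous on the bounded set where the mean is computed, which is what lets the mean pass through $\phi$. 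The rest is routine bookkeeping with the Cartan and modular structure.
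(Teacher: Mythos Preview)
The paper does not prove this lemma; it merely quotes it from \cite{CameronPittsZarikianCaMASAvNAlNoNePrMeTh}, so there is no in-paper argument to compare against. Your treatment of part (b) is sound: the identification $E_\D\eta_\phi(y)=\eta_\phi(E(y))$, the verification that $P:=\pi_\phi(w)E_\D\pi_\phi(w^*)$ is a projection with range $\K$, and the Bures-continuity argument giving $s=wE(w^*s)$ on $\S_w$ are all correct and efficiently organized.

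There is, however, a genuine gap in part (a). You want
\[
\mean_{U}\phi\big(y^*wU(w^*x)U^*\big)=\phi\big(y^*wE(w^*x)\big),
\]
and you justify this by first invoking $\mean_U UaU^*=E(a)$ in $\M$ and then ``passing the mean through $\phi$'' via $\sigma$-weak continuity of $z\mapsto\phi(y^*wz)$. Two problems arise. First, the weak-operator mean $m:=\mean_U UaU^*$ is characterized by $\rho(m)=\mean_U\rho(UaU^*)$ for $\rho\in\M_*$; it is \emph{not} a $\sigma$-weak limit of a net, so even a functional that is $\sigma$-weakly continuous on bounded sets need not commute with $\mean$. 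Second, the functional $\psi(z)=\phi(y^*wz)=\langle\eta_\phi(z),\eta_\phi(w^*y)\rangle$ generally fails to lie in $\M_*$ when $\phi$ is a genuine (infinite) weight: one would need $w^*y\in\fn_\phi\cap\fn_\phi^*$, which is not assumed. Your stated reason, that ``$\eta_\phi$ is $\sigma$-weakly--weakly continuous on bounded subsets of $\fn_\phi$,'' is neither standard nor proved, and in any case would not by itself allow the interchange.

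Unwinding your computation, what you actually need is the $w=I$ case of the lemma: writing $Q':=\mean_U\pil(U)\pir(U^*)$, one has $Q=\pi_\phi(w)Q'\pi_\phi(w^*)$ (because $\pir(\D)$ commutes with $\pi_\phi(\M)$), so $Q=P$ is equivalent to $Q'=E_\D$. The inequality $E_\D\leq Q'$ is immediate (each $\eta_\phi(d)$ is fixed), but the reverse inequality---that every $\xi$ with $\pil(d)\xi=\pir(d)\xi$ for all $d\in\D$ lies in $\overline{\eta_\phi(\fn_\phi\cap\D)}$---is exactly the substantive content of \cite[Lemma~1.4.6]{CameronPittsZarikianCaMASAvNAlNoNePrMeTh}, and your argument does not supply it. A clean repair is to prove $Q'=E_\D$ directly (for instance by showing $(Q'-E_\D)\eta_\phi(vd)=0$ for $v\in\G\N(\M,\D)$, $d\in\fn_\phi\cap\D$, using the normalizer identity $Uv=v(v^*Uv)$), after which your reduction $Q=\pi_\phi(w)Q'\pi_\phi(w^*)=\pi_\phi(w)E_\D\pi_\phi(w^*)=P$ finishes part (a).
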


We may view $\S\mapsto \supp(\S)$ as a map from the set of
$\D$-bimodules of $\M$ into the projection lattice of $\Z$.
Conversely, given a projection $Q$ in $\Z$, define a $\D$-bimodule,
$\bimod(Q)$, 
by
\begin{equation*} \bimod(Q)=\{T\in\M:\ \supp(T)\leq Q\}.
\end{equation*} It follows
from~\cite[Lemma~2.1.4(c)]{CameronPittsZarikianCaMASAvNAlNoNePrMeTh} that $\bimod(Q)$ is Bures-closed.
The operations $\bimod$ and $\supp$ satisfy the
following ``reflexivity-type'' condition.

\begin{theorem}[{\cite[Theorem~2.5.1]{CameronPittsZarikianCaMASAvNAlNoNePrMeTh}}]
  A $\D$-bimodule $\S\subseteq \M$ is 
 Bures-closed if and only if
  \begin{equation*} \S=\bimod(\supp(\S)).
  \end{equation*}
\end{theorem}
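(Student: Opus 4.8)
The plan is to prove the ``reflexivity-type'' identity $\S = \bimod(\supp(\S))$ for a Bures-closed $\D$-bimodule $\S$ by establishing the two inclusions separately; the easy one is $\S \subseteq \bimod(\supp(\S))$ and the content lies in the reverse inclusion. For the easy direction, observe that if $T \in \S$ then the Bures-closed bimodule generated by $T$ lies inside $\S$, so $\supp(T) \leq \supp(\S)$ by monotonicity of $\supp$, and hence $T \in \bimod(\supp(\S))$ directly from the definition of $\bimod$. One should first record that $\supp$ is monotone: if $\S_1 \subseteq \S_2$ are $\sigma$-weakly closed bimodules, then $\pi_\phi(\S_1)\eta_\phi(\fn_\phi \cap \D) \subseteq \pi_\phi(\S_2)\eta_\phi(\fn_\phi \cap \D)$, so the closures nest and the support projections are ordered. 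This much is essentially formal.

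For the reverse inclusion $\bimod(\supp(\S)) \subseteq \S$, set $Q := \supp(\S)$ and let $T \in \M$ with $\supp(T) \leq Q$; the goal is to show $T \in \S$. I would reduce to normalizers: by \cite[Theorem~2.5.1]{CameronPittsZarikianCaMASAvNAlNoNePrMeTh} (the generation statement quoted earlier), the Bures-closed bimodule generated by $T$ is the Bures-closure of the bimodule generated by those $w \in \G\N(\M,\D)$ that it contains, so it suffices to show every $w \in \G\N(\M,\D)$ with $P_w = \supp(w) \leq Q$ already lies in $\S$. Here is where the structure of supports of normalizers enters. Writing $Q = \supp(\S)$, and using that $\S$ itself is generated (as a Bures-closed bimodule) by $\S \cap \G\N(\M,\D)$, one expects $Q = \bigvee \{P_v : v \in \S \cap \G\N(\M,\D)\}$. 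The crux is then a comparison lemma for normalizers: if $w, v \in \G\N(\M,\D)$ and $P_w \leq P_v$, then $w$ lies in the Bures-closed bimodule generated by $v$ (equivalently $w = v E(v^*w)$, or $w$ is a ``piece'' of $v$ cut down by a projection in $\D$). Granting such a lemma, a general $w$ with $P_w \leq Q = \bigvee P_v$ can be decomposed, using the fact that $\G\N(\M,\D)$ is closed under taking pieces $wd$ for $d \in \D$ a projection and the partial-isometry structure, into a sum (or Bures-convergent combination) of normalizers each dominated by some individual $P_v$, each of which lies in $\S$; hence $w \in \S$.

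The main obstacle I anticipate is precisely this passage from $P_w \leq \supp(\S) = \bigvee_{v} P_v$ to ``$w$ is assembled from pieces under the individual $P_v$.'' The supremum $\bigvee P_v$ is taken in the projection lattice of the \emph{abelian} algebra $\Z$, so it is a genuine (order-theoretic) sup, not just a span; translating this lattice-theoretic domination back into an operator identity about $w$ requires the dictionary between projections in $\Z$ and bimodules, together with a measure-theoretic / exhaustion argument to chop $P_w$ into countably many disjoint pieces $P_w E_n$ with $P_w E_n \leq P_{v_n}$ for suitable $v_n \in \S \cap \G\N(\M,\D)$ (separability of $\M_*$, if needed, makes this countable). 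One then checks that each $w E_n' \in \S$ where $E_n'$ is the projection in $\D$ corresponding to $P_w E_n$, using the comparison lemma, and that $\sum_n w E_n'$ converges to $w$ in the Bures topology, so $w \in \S$ because $\S$ is Bures-closed. I would expect the comparison lemma for normalizers and the compatibility of $\supp$ with the Bures topology (i.e.\ that $\supp(wd) = P_w \cdot (\text{proj in }\Z\text{ corresp. to }d)$ and that Bures-convergent sums have supports joining up correctly) to be exactly the ingredients imported from \cite{CameronPittsZarikianCaMASAvNAlNoNePrMeTh}, specifically the lemmas already cited in the excerpt (Lemma~1.4.6, Lemma~2.1.3, Lemma~2.1.4(c)); the real work is organizing the exhaustion and convergence argument around them.
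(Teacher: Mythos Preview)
The paper does not prove this statement at all: it is quoted verbatim as \cite[Theorem~2.5.1]{CameronPittsZarikianCaMASAvNAlNoNePrMeTh} and used as a black box, with no proof given. There is therefore nothing in the present paper to compare your argument against.

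That said, one genuine issue in your outline deserves comment. You invoke ``\cite[Theorem~2.5.1]{CameronPittsZarikianCaMASAvNAlNoNePrMeTh} (the generation statement quoted earlier)'' in order to reduce to normalizers. But the theorem you are trying to prove \emph{is} Theorem~2.5.1 of that reference; the generation statement (that a Bures-closed bimodule is generated by its groupoid normalizers) and the reflexivity statement $\S=\bimod(\supp(\S))$ are both parts of the same result there. So as written your argument is circular, or at best assumes that in the original source the generation half is established independently of, and prior to, the reflexivity half. If you want a self-contained proof you would need to prove the generation statement separately first (which is itself the substantial content), or else argue the reverse inclusion $\bimod(\supp(\S))\subseteq\S$ without appealing to it. Your exhaustion/decomposition sketch for passing from $P_w\leq\bigvee_v P_v$ to pieces of $w$ dominated by individual $P_v$'s is plausible in spirit, but you have correctly identified it as the real difficulty, and it is not resolved in what you wrote.
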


\section{Projections and Relations}\label{sec: proj} Throughout this
section, let $(\M,\D)$ be a Cartan pair with conditional expectation
$E$.  
Let $\{\pi,\fH, J,\fP\}$
be a standard form of $\M$, and construct the
maximal abelian algebra $\Z$ in $\B(\fH)$ as discussed in
section~\ref{sec: prelim}. 
We do not impose any condition of separable predual in this
section.

Our aim in this section is to better describe how the projections in
$\D$ relate to each other, in terms of the normalizers in
$\N(\M,\D)$. This in turn will provide us with a better description of
some of the projections in $\Z$. In Proposition~\ref{prop: relation},
we will describe exactly when $\pil(Q_1)\pir(Q_2)=0$ for projections
$Q_1, Q_2\in\D$. This will be determined by the existence of certain
normalizers in $\N(\M,\D)$. In the case of atomic projections in $\Z$
we will be able to go further. In Proposition~\ref{atomic} we will
show that the atomic projections of $\Z$ are completely determined by
the atomic projections in $\D$. This is a key tool in proving our main
result.

\begin{lemma}\label{lem1} Let $Q_1, Q_2$ be projections in $\D$. If
$w\in\G\N(\M, \D)$ then $ww^*\leq Q_1$ and $w^*w\leq Q_2$ if and only
if $w\in\bimod(\pil(Q_1)\pir(Q_2))$.
\end{lemma}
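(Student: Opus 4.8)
The plan is to prove both implications by relating the two conditions to the explicit description of $P_w = \supp(w)$ furnished by the preceding lemma, together with the reflexivity-type identity $\S = \bimod(\supp(\S))$. The key observation is that $w \in \bimod(\pil(Q_1)\pir(Q_2))$ means precisely $P_w \le \pil(Q_1)\pir(Q_2)$, so everything reduces to showing that the range conditions $ww^* \le Q_1$ and $w^*w \le Q_2$ are equivalent to $P_w \le \pil(Q_1)$ and $P_w \le \pir(Q_2)$ holding simultaneously (using that $\pil(Q_1)$ and $\pir(Q_2)$ commute, being in the abelian algebra $\Z$, so their product is the meet of the two projections).

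First I would handle the forward direction. Assume $ww^* \le Q_1$ and $w^*w \le Q_2$. Using part (2) of the lemma, $P_w$ is the orthogonal projection onto $\overline{\{\eta_\phi(wd): d \in \fn_\phi \cap \D\}}$. Since $w = ww^* w = Q_1 w$ (as $ww^* \le Q_1$), each $wd = Q_1 w d = \pil(Q_1)\eta_\phi(wd)$ at the level of vectors, so the closed span lies in the range of $\pil(Q_1) = \pi(Q_1)$; hence $P_w \le \pil(Q_1)$. For the other factor, I would use part (1): $P_w = \mean_{U \in \U(\D)} \pil(wUw^*)\pir(U^*)$. Since $w^*w \le Q_2$, for any unitary $U \in \D$ we have $wU w^* = w(w^*w)U(w^*w)w^* \cdot(\text{adjusting})$—more carefully, $w Q_2 = w$ and $Q_2 U = U Q_2$ commute, and one checks $\pil(wUw^*)\pir(U^*) = \pil(wUw^*)\pir(U^*)\pir(Q_2)$ because $U^* = U^* Q_2 + U^*Q_2^\perp$ and the $Q_2^\perp$ part is killed after multiplying by the range projection of $wUw^*$; since $\pir(Q_2)$ is a fixed projection in $\Z$ independent of $U$, it factors out of the mean, giving $P_w = P_w \pir(Q_2)$, i.e. $P_w \le \pir(Q_2)$. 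Combining, $P_w \le \pil(Q_1)\pir(Q_2)$, which is exactly $w \in \bimod(\pil(Q_1)\pir(Q_2))$.

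For the converse, assume $P_w \le \pil(Q_1)\pir(Q_2)$, so in particular $P_w \le \pil(Q_1) = \pi(Q_1)$ and $P_w \le \pir(Q_2)$. From part (2), $\eta_\phi(wd) \in \ran P_w \subseteq \ran \pi(Q_1)$ for all $d \in \fn_\phi \cap \D$, so $\eta_\phi(wd) = \pi(Q_1)\eta_\phi(wd) = \eta_\phi(Q_1 wd)$, whence $\eta_\phi((w - Q_1 w)d) = 0$ for all such $d$; choosing $d$ running through an approximate identity for $\D$ drawn from $\fn_\phi \cap \D$ (available since $\phi$ restricted to $\D$ is semi-finite and $\phi\circ E = \phi$) and using normality of $\eta_\phi$, one gets $\eta_\phi(w - Q_1 w) = 0$, hence $w = Q_1 w$, i.e. $ww^* \le Q_1$. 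For $w^*w \le Q_2$: apply the same argument to $w^* \in \G\N(\M,\D)$, noting $P_{w^*} = J P_w J$ interchanges the roles of $\pil$ and $\pir$ (this is where $P_w \le \pir(Q_2)$ translates to $P_{w^*} \le \pil(Q_2)$), so the first part of the argument gives $w^*(w^*)^* = w^*w \le Q_2$. I expect the main obstacle to be the bookkeeping in the converse direction when passing from "$\eta_\phi(wd) = 0$ for all $d \in \fn_\phi \cap \D$" to "$w = Q_1 w$": one must justify the approximate-identity step carefully, using that $\fn_\phi \cap \D$ is $\sigma$-weakly dense in $\D$ and that $\eta_\phi$ is $\sigma$-weak-to-weak continuous on bounded sets, together with the bimodule property of $\fn_\phi$ from Lemma~\ref{nphibi}; alternatively one can invoke the identity $P_w\eta_\phi(x) = \eta_\phi(wE(w^*x))$ from part (2) directly to extract the range conditions without an approximate identity, which may be the cleaner route.
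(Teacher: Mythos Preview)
Your overall strategy---reduce everything to the inequality $P_w \le \pil(Q_1)\pir(Q_2)$ and handle the two factors separately---is sound, but the forward argument for $P_w \le \pir(Q_2)$ has a genuine gap. Your claim that $\pil(wUw^*)\pir(U^* Q_2^\perp) = 0$ because ``the $Q_2^\perp$ part is killed after multiplying by the range projection of $wUw^*$'' is false: $\pil$ and $\pir$ act independently on the left and right, so a range constraint on $wUw^*$ (which sits under $Q_1$ on the \emph{left}) says nothing about the right action of $Q_2^\perp$. Concretely, take $\M = M_2(\bbC)$, $\D = D_2$, $w = e_{12}$, $U = I$; then $\pil(wUw^*)\pir(U^*Q_2^\perp) = \pil(e_{11})\pir(e_{11}) \ne 0$. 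The fix is to drop the invariant-mean formula and reuse the part-(2) idea you already applied for $\pil(Q_1)$: since $\pir(Q_2)\in\pi_\phi(\M)'$ commutes with $\pi_\phi(w)$ and $\pir(Q_2)\eta_\phi(d) = \eta_\phi(dQ_2)$ for $d \in \fn_\phi\cap\D$, one gets $\pir(Q_2)\eta_\phi(wd) = \eta_\phi(wdQ_2) = \eta_\phi(wd)$. The paper does exactly this, handling both factors in a single line by computing $\pil(Q_1)\pir(Q_2)\pi_\phi(w)\eta_\phi(d) = \eta_\phi(Q_1 wd Q_2) = \eta_\phi(wd)$.

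For the converse you correctly anticipate trouble, but the remedy you propose (``normality of $\eta_\phi$'' plus an approximate identity) is not the right one: $\eta_\phi$ is not $\sigma$-weakly continuous on $\fn_\phi$. The clean route is the paper's: from $\eta_\phi(vd) = 0$ conclude $\phi((vd)^*(vd)) = 0$, hence $vd = 0$ by faithfulness of $\phi$; then weak-$*$ density of $\fn_\phi\cap\D$ in $\D$ gives $v = 0$. The paper sets $v = w - Q_1 w Q_2$ from the outset and thereby obtains both range conditions simultaneously, so your detour through $w^*$ and the unproved identity $P_{w^*} = J P_w J$ is unnecessary.
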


\begin{proof} Suppose $ww^*\leq Q_1$ and $w^*w\leq Q_2$. By
\cite[Lemma~2.1.4]{CameronPittsZarikianCaMASAvNAlNoNePrMeTh} it
suffices to show that
$\pi_\phi(w)\eta_\phi(\fn_\phi\cap\D)\subseteq\ran(\pil(Q_1)\pir(Q_2))$. Take
any $d\in\fn_\phi\cap\D$. Then
  \begin{align*}
\pil(Q_1)\pir(Q_2)\pi_\phi(w)\eta_\phi(d)&=\eta_\phi(Q_1wdQ_2)\\
&=\eta_\phi(Q_1wQ_2d)\\ &=\eta_\phi(wd)\\ &=\pi_\phi(w)\eta_\phi(d).
  \end{align*} Hence
$\pi_\phi(w)\eta_\phi(\fn_\phi\cap\D)\subseteq\ran(\pil(Q_1)\pir(Q_2))$
and $w\in\bimod(\pil(Q_1)\pir(Q_2))$.

  Conversely, suppose $w\in\G\N(\M,
\D)\cap\bimod(\pil(Q_1)\pir(Q_2))$. Let $v= w-Q_1 w Q_2$. We will be
done once we show that $v=0$. Note that, again by \cite[Lemma
2.1.4]{CameronPittsZarikianCaMASAvNAlNoNePrMeTh},
$\pi_\phi(w)\eta_\phi(d)\in\ran(\pil(Q_1)\pir(Q_2))$. Thus for
$d\in\fn_\phi\cap\D$ we have
  \begin{align*}
\pi_\phi(v)\eta_\phi(d)&=\pi_\phi(w)\eta_\phi(d)-\pi_\phi(Q_1wQ_2)\eta_\phi(d)\\
&=\pi_\phi(w)\eta_\phi(d)-\eta_\phi(Q_1wdQ_2)\\
&=\pi_\phi(w)\eta_\phi(d)-\pil(Q_1)\pir(Q_2)\pi_\phi(w)\eta_\phi(d)\\
&=0
  \end{align*} Hence $\eta_\phi(vd)=0$ for all
$d\in\fn_\phi\cap\D$. By the faithfulness of $\phi$ it follows that
$vd=0$ for all $d\in\fn_\phi\cap\D$. Since $\fn_\phi\cap\D$ is
weak-$*$ dense in $\D$, it follows that $vd=0$ for every
$d\in\D$. Hence $v=0$.
\end{proof}

We will give a more complete description of the relationship between
the projections $\{P_v: v\in\G\N(\M,\D)$ and the partial isometries in
$\G\N(\M,\D)$ in Lemma~\ref{lem2}. For the time being, Lemma~\ref{lem1}
gives the following statement.

\begin{cor}\label{cor1} If $u,v\in\G\N(\M,\D)$ and $P_u=P_v$, then
$vv^*=uu^*$ and $v^*v=u^*u$.
\end{cor}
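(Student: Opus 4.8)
The plan is to reduce Corollary~\ref{cor1} directly to Lemma~\ref{lem1} by choosing the projections $Q_1,Q_2$ optimally. Given $v\in\G\N(\M,\D)$, the natural choice is $Q_2:=v^*v$ and $Q_1:=vv^*$, both of which lie in $\D$ since $v$ is a normalizer. Then trivially $vv^*\leq Q_1$ and $v^*v\leq Q_2$, so Lemma~\ref{lem1} gives $v\in\bimod(\pil(vv^*)\pir(v^*v))$; in particular $P_v=\supp(v)\leq \pil(vv^*)\pir(v^*v)$.

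Now suppose $u,v\in\G\N(\M,\D)$ with $P_u=P_v$. First I would apply Lemma~\ref{lem1} to $v$ with $Q_1=uu^*$, $Q_2=u^*u$: since $P_v=P_u\leq\pil(uu^*)\pir(u^*u)$, we have $v\in\bimod(\pil(uu^*)\pir(u^*u))$, so the ``only if'' direction of Lemma~\ref{lem1} forces $vv^*\leq uu^*$ and $v^*v\leq u^*u$. By symmetry (swapping the roles of $u$ and $v$, which is legitimate since $P_u=P_v$ is symmetric) we also get $uu^*\leq vv^*$ and $u^*u\leq v^*v$. Combining the two pairs of inequalities yields $vv^*=uu^*$ and $v^*v=u^*u$, which is exactly the assertion.

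I do not anticipate a genuine obstacle here: the argument is essentially just unwinding the equivalence in Lemma~\ref{lem1} twice. The only point requiring a moment's care is checking that the hypothesis of Lemma~\ref{lem1} is met with the ``reversed'' choice of projections, i.e. that $Q_1=uu^*$ and $Q_2=u^*u$ are indeed projections in $\D$ — but this is immediate from $u\in\G\N(\M,\D)$. One could equally phrase the proof by noting that $\pil(vv^*)\pir(v^*v)$ is the smallest projection of the form $\pil(Q_1)\pir(Q_2)$ dominating $P_v$ and comparing these minimal projections directly; both routes are a one- or two-line deduction from Lemma~\ref{lem1}.
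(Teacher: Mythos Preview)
Your proposal is correct and is essentially the same argument as the paper's: both observe that $P_w\leq\pil(ww^*)\pir(w^*w)$ for any $w\in\G\N(\M,\D)$, then use $P_u=P_v$ together with the converse direction of Lemma~\ref{lem1} to obtain the two pairs of inequalities, which by symmetry give the desired equalities. The paper merely states this more tersely, deducing $uu^*\leq vv^*$ and $u^*u\leq v^*v$ from $P_u\leq P_v$ and leaving the symmetric step implicit.
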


\begin{proof} This follows immediately from Lemma~\ref{lem1}:  if
$P_u\leq P_v$, then, since $P_v\leq \pil(vv^*)\pir(v^*v)$,  
$uu^*\leq vv^*$ and $u^*u\leq v^*v$.
\end{proof}

\begin{prop}\label{prop: relation} For any two projections $Q_1$ and
$Q_2$ in $\D$, the following are equivalent:
  \begin{enumerate}
   \item $\pil(Q_1)\pir(Q_2)\neq0$;
   \item there is a non-zero $v\in\G\N(\M, \D)$ such that $vv^*\leq
Q_1$ and $v^*v\leq Q_2$;
   \item $Q_1\M Q_2\neq\{0\}$;
   \item there exists a $\sigma$-weakly closed $\D$-bimodule
     $\S\subseteq \M$ such that $Q_1\S Q_2\neq \{0\}$.
  \end{enumerate}
\end{prop}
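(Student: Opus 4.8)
The plan is to prove the equivalences by establishing the cycle $(1)\Rightarrow(2)\Rightarrow(4)\Rightarrow(3)\Rightarrow(1)$, with the implication $(2)\Rightarrow(4)$ being essentially trivial (take $\S$ to be the $\sigma$-weakly closed bimodule generated by $v$, which is nonzero since $v\neq 0$, and note $Q_1\S Q_2\ni Q_1 v Q_2$; but I should check $Q_1 v Q_2\neq 0$ — in fact from $vv^*\le Q_1$ and $v^*v\le Q_2$ one gets $Q_1 v = v$ and $v Q_2 = v$, so $Q_1 v Q_2 = v\neq 0$). The implication $(4)\Rightarrow(3)$ is also immediate since $\S\subseteq\M$, so $Q_1\S Q_2\subseteq Q_1\M Q_2$, forcing $Q_1\M Q_2\neq\{0\}$.

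For $(1)\Rightarrow(2)$: assuming $\pil(Q_1)\pir(Q_2)\neq 0$, consider the Bures-closed bimodule $\bimod(\pil(Q_1)\pir(Q_2))$, which is nonzero (its support is $\pil(Q_1)\pir(Q_2)$, which is a nonzero projection in $\Z$; one should verify this projection actually lies in $\Z$ and equals the support of that bimodule, but $\pil(Q_1),\pir(Q_2)\in\Z$ by construction and they commute, so their product is a projection in $\Z$, and by the reflexivity theorem $\supp(\bimod(Q)) = Q$ for projections $Q\in\Z$). Since this bimodule is nonzero and Bures-closed, by \cite[Theorem~2.5.1]{CameronPittsZarikianCaMASAvNAlNoNePrMeTh} it is generated as a Bures-closed bimodule by its intersection with $\G\N(\M,\D)$; in particular there is a nonzero $w\in\G\N(\M,\D)\cap\bimod(\pil(Q_1)\pir(Q_2))$. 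By Lemma~\ref{lem1}, this $w$ satisfies $ww^*\le Q_1$ and $w^*w\le Q_2$, which is exactly $(2)$.

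The remaining implication $(3)\Rightarrow(1)$ is the substantive one and the step I expect to be the main obstacle. Here I want to argue contrapositively: if $\pil(Q_1)\pir(Q_2)=0$, then $Q_1\M Q_2=\{0\}$. The natural route is: for any $T\in\M$, the support of the bimodule generated by $Q_1 T Q_2$ should be dominated by $\pil(Q_1)\pir(Q_2)$, because $\supp$ of a bimodule is the projection onto $\overline{\pi_\phi(\S)\eta_\phi(\fn_\phi\cap\D)}$ and for $d\in\fn_\phi\cap\D$ one computes $\pi_\phi(Q_1 T Q_2)\eta_\phi(d) = \eta_\phi(Q_1 T Q_2 d) = \eta_\phi(Q_1 T d Q_2) = \pil(Q_1)\pir(Q_2)\pi_\phi(T)\eta_\phi(d)$ (using that $Q_2\in\D$ commutes with $d\in\D$, and the defining relations for $\pil,\pir$ acting on vectors of the form $\eta_\phi(\cdot)$), so this vector lies in $\ran(\pil(Q_1)\pir(Q_2)) = \{0\}$. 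Hence $\supp(Q_1 T Q_2) = 0$, and since $\bimod(0) = \{0\}$ (a bimodule with zero support is zero, again by the reflexivity theorem), we conclude $Q_1 T Q_2 = 0$ for all $T\in\M$, i.e. $Q_1\M Q_2 = \{0\}$. The delicate point to get right is the interaction of $\pir$ with right multiplication by elements of $\D$ on the GNS-type vectors $\eta_\phi(\cdot)$ — this is where the standard-form machinery and the relation $\pir(d) = J\pi(d^*)J$ together with $J\eta_\phi(x) = \eta_\phi(x^*)$-type identities (for $x$ in the appropriate domain) must be invoked carefully, possibly passing through the modular operator; I would isolate this as a short computational lemma or cite the relevant identity from \cite{CameronPittsZarikianCaMASAvNAlNoNePrMeTh} or \cite{TakesakiThOpAlII}. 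Once that identity is in hand, all four implications close up.
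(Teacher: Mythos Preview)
Your argument is correct and largely parallels the paper's, but you close the cycle along a different edge.  The paper establishes (a)$\Leftrightarrow$(b) via Lemma~\ref{lem1} (essentially your (1)$\Rightarrow$(2)), and then runs (b)$\Rightarrow$(c)$\Rightarrow$(d)$\Rightarrow$(b); for the last step it observes that $Q_1\S Q_2$ is itself a nonzero $\sigma$-weakly closed $\D$-bimodule and applies \cite[Proposition~1.3.4]{CameronPittsZarikianCaMASAvNAlNoNePrMeTh} to extract a nonzero $v\in\G\N(\M,\D)\cap Q_1\S Q_2$, which automatically satisfies $vv^*\leq Q_1$, $v^*v\leq Q_2$.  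You instead close via (3)$\Rightarrow$(1) by a direct contrapositive computation.  That works, and the identity you flag as ``delicate'' --- that $\pir(Q_2)\eta_\phi(x)=\eta_\phi(xQ_2)$ --- is exactly what is already used without comment in the proof of Lemma~\ref{lem1}, so no new machinery is needed.  Your route trades the citation of \cite[Proposition~1.3.4]{CameronPittsZarikianCaMASAvNAlNoNePrMeTh} for a short calculation in the spirit of Lemma~\ref{lem1}.  Two minor points: (i) once you have $\eta_\phi(Q_1TQ_2d)=0$ for all $d\in\fn_\phi\cap\D$, faithfulness of $\phi$ and weak-$*$ density of $\fn_\phi\cap\D$ in $\D$ give $Q_1TQ_2=0$ directly (as in the second half of the proof of Lemma~\ref{lem1}), so the detour through $\supp$ and reflexivity is unnecessary; (ii) the identity $\supp(\bimod(Q))=Q$ that you invoke for (1)$\Rightarrow$(2) is not literally the statement of the reflexivity theorem quoted in the preliminaries (that one is $\bimod(\supp(\S))=\S$), though it does follow from \cite[Theorem~2.5.8]{CameronPittsZarikianCaMASAvNAlNoNePrMeTh} --- the paper simply asserts that $\pil(Q_1)\pir(Q_2)\neq 0$ forces $\bimod(\pil(Q_1)\pir(Q_2))\neq\{0\}$ at this step.
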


\begin{proof} The equivalence of (a) and (b) follows immediately from
Lemma~\ref{lem1}, since $\pil(Q_1)\pir(Q_2)\neq0$ implies that
$\bimod(\pil(Q_1)\pir(Q_2))\neq\{0\}$.

Suppose (b) holds.  Given $v\in\G\N(\M, \D)$ such that $vv^*\leq Q_1$ and
$v^*v\leq Q_2$, we have $Q_1vQ_2=v$ and so $Q_1\M
Q_2\neq\{0\}$.  This gives  (c). 

That (c) implies (d) is obvious.  Finally suppose (d) holds.
Clearly, $Q_1\S Q_2$ is a  $\sigma$-weakly closed $\D$-bimodule.
By~\cite[Proposition~1.3.4]{CameronPittsZarikianCaMASAvNAlNoNePrMeTh},
there exists $0\neq v\in\G\N(\M,\D)\cap Q_1\S Q_2.$  Then $v=Q_1
vQ_2$,   $vv^*\leq Q_1$ and $v^*v\leq Q_2$.  Hence (b) holds, and we
are done.
\end{proof}

The support projections $\{P_w\in\Z:\ w\in\G\N(\M,\D)\}$ have a
natural partial ordering on them, induced from the partial ordering of
the projections in $\Z$.  This ordering imposes a pre-order on
$\G\N(\M,\D)$,  which  we now describe.

\begin{definition} For $u, v\in\G\N(\M,\D)$ we write $u\leq_\D v$ if
there is a $d$ in $\D$ such that $u=vd$.
\end{definition}

It is not hard to see that $\leq_\D$ is a pre-order on
$\G\N(\M,\D)$. For any scalar $\lambda$ with $|\lambda|=1$ and
$v\in\G\N(\M,\D)$ we have $v\leq_\D\lambda v$ and $\lambda v\leq_\D v$ so
$\leq_\D$ is indeed a pre-order and not a partial order. Recall that
partial isometries come already equipped with a partial ordering
$\leq$ (see, for example, \cite{HalmosHiSpPrBk}). In this order $u\leq
v$ if and only if there is a projection $P$ such that $u=vP$. In fact,
$P$ can be chosen to be $u^*u$. It follows that if $u,
v\in\G\N(\M,\D)$ and $u\leq v$ then $u\leq_\D v$. Hence the ordering
$\leq_\D$ is coarser than the usual ordering $\leq$ on partial
isometries.

\begin{lemma}\label{Dorder}
 Let $u,v\in\G\N(\M,\D)$. 
 If  $u\leq_\D v$,  then $v^*u\in\D$ and 
$u=v(v^*u)$.

\end{lemma}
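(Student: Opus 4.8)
The plan is to simply unwind the definition of the pre-order $\leq_\D$ and combine it with two elementary facts: that $v^*v$ lies in $\D$ whenever $v\in\G\N(\M,\D)$, and that $vv^*v=v$ because $v$ is a partial isometry. No measure-theoretic or standard-form machinery is needed here.

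Concretely, I would proceed as follows. By the definition of $u\leq_\D v$, fix $d\in\D$ with $u=vd$. Since $\D$ is a MASA in the von Neumann algebra $\M$, we have $1\in\D$, so the normalizer condition $v^*\D v\subseteq\D$ gives $v^*v=v^*1v\in\D$; indeed $v^*v$ is a projection in $\D$. Now compute
\begin{equation*}
v^*u=v^*vd=(v^*v)d\in\D,
\end{equation*}
because $\D$ is an algebra containing both $v^*v$ and $d$. This establishes the first assertion, $v^*u\in\D$. For the second assertion, using $vv^*v=v$ I would compute
\begin{equation*}
v(v^*u)=v(v^*vd)=(vv^*v)d=vd=u,
\end{equation*}
which is exactly what is claimed.

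There is no real obstacle to overcome: the statement is a direct consequence of the definition of $\leq_\D$ together with the basic properties of groupoid normalizers and partial isometries. The only point that requires a moment's attention is noting that $v^*v\in\D$, and this is immediate from the normalizer condition $v^*\D v\subseteq\D$ applied to $1\in\D$.
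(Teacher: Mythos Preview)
Your proof is correct and follows essentially the same approach as the paper: unwind the definition $u=vd$, observe $v^*u=v^*vd\in\D$, and then use the partial isometry identity to conclude $u=v(v^*u)$. The only cosmetic difference is that the paper verifies $vd=vv^*u$ by computing $a^*a=0$ for $a:=vd-vv^*u$, whereas you invoke $vv^*v=v$ directly; your version is the more streamlined of the two.
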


\begin{proof} As $u\leq_\D v$ there is a $d\in\D$ such that
$u=vd$. Then $v^*u=v^*vd$ is in $\D$. Let
  \begin{equation*} a=vd-vv^*u.
  \end{equation*} We will show $a=0$. We have,
  \begin{align*} a^*a&=(vd-vv^*u)^*(vd-vv^*u)\\
&=d^*v^*vd-d^*v^*u-u^*vd+u^*vv^*u\\
&=d^*v^*vd-d^*v^*vd-d^*v^*vd+d^*v^*vv^*vd & \text{(since $u=vd$)}\\ &=0.&\qedhere
  \end{align*} 
\end{proof}

Now we relate the pre-ordering to  supports of elements of $\G\N(\M,\D)$.
\begin{lemma}\label{lem2} Let $u,v\in\G\N(\M,\D)$. The following 
 are equivalent:
\begin{enumerate}
\item $u\leq_\D v$;
\item  $u=vE(v^*u)$; and 
\item $P_u\leq P_v$.
\end{enumerate}
\end{lemma}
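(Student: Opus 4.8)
The plan is to prove the cycle of implications (a) $\Rightarrow$ (b) $\Rightarrow$ (c) $\Rightarrow$ (a), using Lemma~\ref{Dorder} and Lemma~\ref{lem1} as the main inputs.

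For (a) $\Rightarrow$ (b): if $u\leq_\D v$, then by Lemma~\ref{Dorder} we have $v^*u\in\D$ and $u=v(v^*u)$. Since $u$ is a partial isometry and $v^*u\in\D$, I want to show the ``phase'' part of $v^*u$ is already captured by $E$ — but here $v^*u$ is literally in $\D$, so $E(v^*u)=v^*u$, and thus $u=vE(v^*u)$ trivially. So (a) $\Rightarrow$ (b) is immediate from Lemma~\ref{Dorder}.

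For (b) $\Rightarrow$ (c): assume $u=vE(v^*u)$. Set $d:=E(v^*u)\in\D$. I first claim $uu^*\leq vv^*$ and $u^*u\leq v^*v$. From $u=vd$ we get $uu^* = vdd^*v^* \leq \|d\|^2 vv^*$, so $\ran(uu^*)\subseteq\ran(vv^*)$, giving $uu^*\leq vv^*$; similarly, since $u^*=d^*v^*$, we get $u^*u=d^*v^*vd$; using that $d$ commutes with $v^*v$ (both in $\D$) this is $v^*v\, d^*d\, v^*v \leq \|d\|^2 v^*v$, hence $u^*u\leq v^*v$. Now apply Lemma~\ref{lem1} with $Q_1=vv^*$ and $Q_2=v^*v$: from $uu^*\leq Q_1$, $u^*u\leq Q_2$ we get $u\in\bimod(\pil(Q_1)\pir(Q_2))$, i.e. $P_u\leq \pil(vv^*)\pir(v^*v)$. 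But $P_v\leq\pil(vv^*)\pir(v^*v)$ is not quite what I need; instead I should directly compute using Lemma~1.4.6(2): $P_v\eta_\phi(x)=\eta_\phi(vE(v^*x))$ for $x\in\fn_\phi$, and $P_v$ is the projection onto $\overline{\{\eta_\phi(vd'):d'\in\fn_\phi\cap\D\}}$. Then for $d'\in\fn_\phi\cap\D$, $\eta_\phi(ud') = \eta_\phi(vE(v^*u)d') = \eta_\phi(v\,d\,d')$ and $dd'\in\fn_\phi\cap\D$, so every generator $\eta_\phi(ud')$ of $\ran(P_u)$ lies in $\ran(P_v)$; hence $P_u\leq P_v$. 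This gives (b) $\Rightarrow$ (c) directly, and I can skip the detour through Lemma~\ref{lem1}.

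For (c) $\Rightarrow$ (a): assume $P_u\leq P_v$. By Corollary~\ref{cor1}'s proof technique, or directly: since $P_u\leq P_v\leq\pil(vv^*)\pir(v^*v)$, Lemma~\ref{lem1} gives $uu^*\leq vv^*$ and $u^*u\leq v^*v$. Now I need to produce $d\in\D$ with $u=vd$; the natural candidate is $d=v^*u$, and I need $v^*u\in\D$ and $u=v(v^*u)$. Since $u^*u\leq v^*v$, we have $v^*v\cdot u^*u=u^*u$, so $vv^*u=$ ? — I need $vv^*u=u$, which follows from $uu^*\leq vv^*$: indeed $vv^*\cdot uu^* = uu^*$ implies $vv^*u(u^*u)=u(u^*u)$ hmm, more carefully, $uu^*\leq vv^*$ means $vv^*$ acts as identity on $\ran(u)=\ran(uu^*)$, so $vv^*u = u$. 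Then using Lemma~1.4.6(2) applied to a net $d'\in\fn_\phi\cap\D$ approximating $u^*u$: $P_u\eta_\phi(d') = \eta_\phi(uE(u^*d'))=\eta_\phi(u\,d')$ (as $u^*d'\in\D$ so $E$ fixes it, roughly), while $P_v\eta_\phi(d')=\eta_\phi(vE(v^*d'))$. Since $P_u\leq P_v$, $P_v\eta_\phi(ud'')=\eta_\phi(ud'')$ for appropriate $d''$, i.e. $\eta_\phi(vE(v^*ud'')) = \eta_\phi(ud'')$, and by faithfulness of $\phi$, $vE(v^*u)d''=ud''$ for a weak-$*$ dense set of $d''\in\D$, hence $vE(v^*u)=u$. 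This shows (b) holds with the specific witness $d=E(v^*u)$; combined with (a) $\Leftrightarrow$ (b) already established, we conclude $u\leq_\D v$. The main obstacle is the careful bookkeeping with $\eta_\phi$ on the (non-unital, weight-theoretic) domain $\fn_\phi\cap\D$ and justifying the passage from statements about $\eta_\phi(ud')$ for $d'\in\fn_\phi\cap\D$ to statements about the operator identity $u=vE(v^*u)$; the weak-$*$ density of $\fn_\phi\cap\D$ in $\D$ and faithfulness of $\phi$ (as used in Lemma~\ref{lem1}) are exactly the tools for this, so it is routine but must be done with care.
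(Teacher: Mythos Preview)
Your proof is correct and follows essentially the same route as the paper: all three implications rest on the formula $P_w\eta_\phi(x)=\eta_\phi(wE(w^*x))$ together with faithfulness of $\phi$ and weak-$*$ density. The paper's version is a bit more streamlined---it computes $P_vP_u\eta_\phi(x)=\eta_\phi(vE(v^*u)E(u^*x))$ once and reads off both (b)$\Rightarrow$(c) and (c)$\Rightarrow$(a) from that identity (setting $x=u$ at the end), whereas you detour through Lemma~\ref{lem1} and the range/source inequalities $uu^*\leq vv^*$, $u^*u\leq v^*v$, which you never actually need.
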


\begin{proof} The previous lemma shows that (a) implies (b).

Suppose next that 
$u=vE(v^*u)$. For any $x\in\fn_\phi$ we see that
  \begin{align*} P_vP_u\eta_\phi(x)&=P_v\eta_\phi(uE(u^*x))\\
&=\eta_\phi(vE(v^*u)E(u^*x))\\ &=\eta_\phi(uE(u^*x))=P_u\eta_\phi(x).
  \end{align*} Thus $P_u\leq P_v$.  So (b) implies (c).

  Now assume that $P_u\leq P_v$ for some $u, v\in\G\N(\M,\D)$. We aim
to show that $u=vE(v^*u)$. As $P_u\leq P_v$ a similar calculation to
above shows that, for all $x\in\fn_\phi$
  \begin{equation*} \eta_\phi(vE(v^*u)E(u^*x))=\eta_\phi(uE(u^*x)).
  \end{equation*} As $\phi$ is faithful, it follows that for all
$x\in\fn_\phi$ we have
  \begin{equation*} vE(v^*u)E(u^*x)=uE(u^*x).
  \end{equation*} As $\fn_\phi$ is weak-$*$ dense in $\M$ and $E$ is
normal, the above equation holds for all $x\in\M$. In particular it
holds when $x=u$, and hence $u=vE(v^*u)\in v\D$.  Therefore $u\leq_\D v$.
\end{proof}

We now classify the atomic projections of $\Z$ in terms of the atomic
projections in $\D$.

\begin{prop}\label{atomic} Let $(\M,\D)$ be a Cartan pair with
standard form $\{\pi,\fH,J,\fP\}$.  The following statements hold.
   \begin{enumerate}
\item If $A\in\atom(\Z)$, then there exist unique $Q_1, Q_2\in
\atom(\D)$ such that $A=\pil(Q_1)\pir(Q_2)$.  In addition, $Q_1\sim
Q_2$.
\item For $i=1,2,$ suppose $Q_i\in\atom(\D)$ and $Q_1\sim Q_2$.  Then
$\pil(Q_1)\pir(Q_2)\in \atom(\Z).$
\item The algebras $\Z$ and $\D$ satisfy,
$$\sum_{Q\in\atom(\D)}Q < I_\D \dstext{if and only if}
\sum_{A\in\atom(\Z)}A<I_\Z.$$
\end{enumerate}
\end{prop}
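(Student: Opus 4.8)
The plan is to prove all three parts together, using the pre-order results from Lemmas~\ref{lem1}--\ref{lem2} together with the density result of \cite[Theorem~2.5.1]{CameronPittsZarikianCaMASAvNAlNoNePrMeTh} that $\S\cap\G\N(\M,\D)$ generates any nonzero Bures-closed bimodule~$\S$. For part~(b), the input is essentially already assembled: if $Q_1\sim Q_2$ there is a partial isometry $v\in\M$ with $v^*v=Q_2$, $vv^*=Q_1$, and (by the remark after the Notation box) $v\in\G\N(\M,\D)$; by Lemma~\ref{lem1}, $v\in\bimod(\pil(Q_1)\pir(Q_2))$, so in particular $\pil(Q_1)\pir(Q_2)\neq 0$. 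To see this projection is an atom of $\Z$, suppose $0\neq P\leq \pil(Q_1)\pir(Q_2)$ with $P\in\Z$; then $\bimod(P)$ is a nonzero Bures-closed bimodule contained in $\bimod(\pil(Q_1)\pir(Q_2))=Q_1\M Q_2$ (the latter equality from Lemma~\ref{lem1} applied to all of its normalizer generators), so it contains a nonzero $w\in\G\N(\M,\D)$ with $ww^*\leq Q_1$, $w^*w\leq Q_2$. Since $Q_1,Q_2$ are atoms, $ww^*=Q_1$ and $w^*w=Q_2$, so $w^*v$ is a unitary multiplier in $Q_2\D Q_2=\bbC Q_2$, giving $w=\lambda v$ and hence $P_w=P_v$; by Lemma~\ref{lem1} again $P_v=\pil(Q_1)\pir(Q_2)$, so $P\geq P_w=\pil(Q_1)\pir(Q_2)$, forcing equality. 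Thus $\pil(Q_1)\pir(Q_2)$ is an atom.

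For part~(a), let $A\in\atom(\Z)$. Then $\bimod(A)$ is a nonzero Bures-closed bimodule, so it contains a nonzero $w\in\G\N(\M,\D)$; by Lemma~\ref{lem1}, $P_w\leq \pil(ww^*)\pir(w^*w)$, and since $P_w\leq A$ is nonzero and $A$ is an atom, $P_w=A$. Now I want to shrink $ww^*$ and $w^*w$ to atoms: pick any atom $Q_2\in\atom(\D)$ with $Q_2\leq w^*w$ (this uses that $\D$ is generated by its atoms below... no — careful, $\D$ need not be atomic). This is the one genuine subtlety: $w^*w$ need not dominate any atom of $\D$. However, $P_w = A$ is an atom of $\Z$, and I will argue that the diffuse part cannot support an atom of $\Z$. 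More precisely, decompose $(\M,\D)=(\M_c,\D_c)\oplus(\M_a,\D_a)$ via Proposition~\ref{summand}; the central projection $X=\sum_{Q\in\atom(\D)}Q$ lies in $\D\subseteq\Z$... but I need the $\Z$-version. The cleaner route: show $\pil(X)=\pir(X)$ (using that $J\pi(X)J$ is the central support on the commutant side and $X$ central forces $\pir(X)=\pil(X)$), so $\pil(X)\in\Z$ is central in $\B(\fH)$ restricted appropriately, and $\bimod(\pil(X)^\perp)=\M_c$ contains no atom-supported normalizer because in $\M_c$ there are no atoms in $\D_c$ and an atom $A\leq\pil(X)^\perp$ would, by the above argument, force a normalizer $w$ with $P_w=A$, then force $ww^*,w^*w$ to dominate atoms of $\D_c$ — contradiction. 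Hence $A\leq\pil(X)$, we reduce to the atomic part, where $w^*w\leq X$ does dominate an atom $Q_2$, set $w'=wQ_2$, then $w'\leq_\D w$ so $P_{w'}\leq P_w=A$, and $w'\neq 0$ so $P_{w'}=A$; similarly $w'(w')^*\leq w'w^*\cdots$ lies under some atom $Q_1$; replacing $w'$ by $Q_1w'$ gives $w''\in\G\N$ with $w''(w'')^*=Q_1$, $(w'')^*w''=Q_2$, $P_{w''}=A$. Then by Lemma~\ref{lem1}, $A=P_{w''}\leq\pil(Q_1)\pir(Q_2)$, and by part~(b) the right side is an atom, so $A=\pil(Q_1)\pir(Q_2)$ and $Q_1\sim Q_2$. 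Uniqueness: if $\pil(Q_1)\pir(Q_2)=\pil(Q_1')\pir(Q_2')$ are equal atoms, multiply by $\pil(Q_1)$ on both sides to get $\pil(Q_1Q_1')\pir(Q_2)\neq 0$, so $Q_1Q_1'\neq 0$, so $Q_1=Q_1'$ (atoms), and symmetrically $Q_2=Q_2'$.

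For part~(c), parts~(a) and~(b) set up a bijection $\atom(\Z)\leftrightarrow R_a=\{(Q_1,Q_2)\in\atom(\D)^2:Q_1\sim Q_2\}$ via $(Q_1,Q_2)\mapsto\pil(Q_1)\pir(Q_2)$. I will show $\sum_{A\in\atom(\Z)}A=\pil\bigl(\sum_{Q\in\atom(\D)}Q\bigr)$ by establishing $\sum_{(Q_1,Q_2)\in R_a}\pil(Q_1)\pir(Q_2)=\pil(X)$ where $X=\sum_{Q\in\atom(\D)}Q$: the left side is $\bigl(\sum_{Q_1}\pil(Q_1)\bigr)\bigl(\sum_{Q_2}\pir(Q_2)\bigr)$ once one checks the cross terms $\pil(Q_1)\pir(Q_2)$ with $Q_1\not\sim Q_2$ vanish — which is exactly Prop.~\ref{prop: relation}(a)$\Leftrightarrow$(c) since $Q_1\not\sim Q_2$ means $Q_1\M Q_2=\{0\}$ — so it equals $\pil(X)\pir(X)=\pil(X)$ using $\pil(X)=\pir(X)$ from the decomposition argument above. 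Since $\pil$ is a faithful normal representation of $\D$, $\pil(X)<I_\Z$ iff $X<I_\D$ iff $\sum_{Q\in\atom(\D)}Q<I_\D$, and $\pil(X)=\sum_{A\in\atom(\Z)}A<I_\Z$ is the other side. The main obstacle, as flagged, is the diffuse-part argument showing $A\leq\pil(X)$ for every atom $A\in\atom(\Z)$, equivalently that $\Z_c:=(\pil(\D_c)\cup\pir(\D_c))''$ is non-atomic; I expect this to follow cleanly from the reduction above plus the fact that a nonzero Bures-closed bimodule in $(\M_c,\D_c)$ is generated by its normalizers $w$, each of which satisfies $P_w\leq\pil(ww^*)\pir(w^*w)$, and $ww^*,w^*w\in\D_c$ are projections dominating no atom, so by a maximality/exhaustion argument $P_w$ can be split, precluding atoms — but making "split $P_w$ because $ww^*$ splits" rigorous requires reproving the atomic-reduction step in reverse, namely that $Q\leq ww^*$ with $0\neq Q\in\D$ gives $0\neq Qw\in\G\N$ with $P_{Qw}\leq P_w$ strictly when $Q<ww^*$, which is Lemma~\ref{lem2} together with Corollary~\ref{cor1}.
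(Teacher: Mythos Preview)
Your plan is essentially correct, but your route for part~(a) is needlessly circuitous. The ``splitting'' argument you identify at the very end --- if $P_w=A$ is an atom of $\Z$ and $0\neq Q<ww^*$ is a proper subprojection in $\D$, then $Qw\in\G\N(\M,\D)$ with $Qw\leq_\D w$, so $P_{Qw}\leq P_w=A$, hence $P_{Qw}=A$, whence Corollary~\ref{cor1} gives $(Qw)(Qw)^*=ww^*$, i.e.\ $Q=ww^*$, a contradiction --- is precisely the paper's proof, and it shows \emph{directly} that $ww^*,w^*w\in\atom(\D)$. No reduction to the atomic summand $(\M_a,\D_a)$ is needed: the claim ``the diffuse part cannot support an atom of $\Z$'' is a \emph{consequence} of this splitting argument, not a prerequisite for it, and your sketch of that detour invoked the conclusion (``force $ww^*,w^*w$ to dominate atoms'') before having the mechanism in hand.

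In part~(b) there is a small slip: Lemma~\ref{lem1} gives only $P_v\leq\pil(Q_1)\pir(Q_2)$, not equality. Your argument is easily repaired --- having shown every nonzero $P\leq\pil(Q_1)\pir(Q_2)$ in $\Z$ satisfies $P\geq P_v$, apply this to $P=\pil(Q_1)\pir(Q_2)-P_v$ to force $P_v=\pil(Q_1)\pir(Q_2)$ --- but the paper's argument is shorter: for $h,k\in\D$ one has $\pil(h)\pir(k)\cdot\pil(Q_1)\pir(Q_2)\in\bbC\,\pil(Q_1)\pir(Q_2)$ because $Q_i\in\atom(\D)$, so by weak-$*$ density $\Z\cdot\pil(Q_1)\pir(Q_2)$ is one-dimensional, and the projection is an atom.

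Your approach to part~(c) --- computing $\sum_{A\in\atom(\Z)}A=\pil(X)\pir(X)=\pil(X)$ via the bijection from (a)--(b), Proposition~\ref{prop: relation} for the vanishing cross terms with $Q_1\not\sim Q_2$, and $\pil(X)=\pir(X)$ from centrality of $X$ --- is genuinely different from the paper's and arguably cleaner. The paper instead argues the two implications separately: if $X<I_\D$ then $\pil(I_\D-X)$ is nonzero and orthogonal to every atom of $\Z$ by part~(a); if $Y<I_\Z$, pick $0\neq v\in\G\N(\M,\D)\cap\bimod(I_\Z-Y)$ and show $vQ=0$ for every $Q\in\atom(\D)$ (else $P_{vQ}$ would be an atom of $\Z$ under $I_\Z-Y$). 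Your computation gives the stronger statement $Y=\pil(X)$ outright.
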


\begin{proof}\textit{Part a)} Take any non-zero $A\in\atom(\Z)$. As
$A\neq 0$ there is a non-zero $v\in\G\N(\M,\D)\cap\bimod(A)$. Hence
$P_v$ is a non-zero projection satisfying $P_v\leq A$. As $A$ is
atomic it follows that $P_v=A$. Let $Q_1=vv^*$ and $Q_2=v^*v$.
Obviously, $Q_1\sim Q_2$.  We aim
to show that $Q_1$ and $Q_2$ are atoms of $\D$.

Suppose that $P\leq Q_1$ is a non-zero projection in $\D$. Let $u=Pv$.
Then $uu^*=P$ and $u^*u\leq Q_2$. We also have that $u\in\G\N(\M,\D)$
and $u\leq v$. Hence $u\leq_\D v$. By Lemma~\ref{lem2}, we have that
$P_u\leq P_v$. As $u$ is non-zero and $P_v$ is atomic it follows that
$P_u=P_v$. By Corollary~\ref{cor1}, $P=Q_1$, and thus
$Q_1\in\atom(\D)$.  A similar argument shows $Q_2\in\atom(\D)$.

Recall that a projection $B$ in an abelian von Neumann algebra $\W$
belongs to $\atom(\W)$ if and only if $\W B$ is one-dimensional. Thus,
for any $h,k\in\D$, we have
\[\pil(h)\pir(k)\pil(Q_1)\pir(Q_2)\in\bbC \pil(Q_1)\pir(Q_2).\] Since
$\Z=\overline{\spn}^{\text{weak-}*}\{\pil(h)\pir(k): h,k\in\D\}$, it
follows that $\pil(Q_1)\pir(Q_2) \Z$ is one-dimensional and hence
$\pil(Q_1)\pir(Q_2)\in\atom(\Z)$. As $A=P_v\leq \pil(Q_1)\pir(Q_2)$, it
follows that $A=\pil(Q_1)\pir(Q_2)$. Uniqueness of $Q_1$ and $Q_2$
follows from Corollary~\ref{cor1}.

\textit{Part b)} Suppose that for $i=1,2,$ $Q_i\in\atom(\D)$ and
$Q_1\sim Q_2$.  Let $v\in\M$ be a partial isometry so that $vv^*=Q_1$
and $v^*v=Q_2$.  As observed earlier, $v\in\G\N(\M,\D)$.  Hence,
$\pil(Q_1)\pir(Q_2)\neq 0$ by Proposition~\ref{prop: relation}.   An
argument similar to that used in part (a) shows $\pil(Q_1)\pir(Q_2)\in\atom(\Z)$.

\textit{Part c)} Parts (a) and (b) show that $\atom(\D)$ and
$\atom(\Z)$ are both empty or both non-empty.  If both are empty, part
(c) holds trivially.

Assume then that $\atom(\Z)$ and $\atom(\D)$ are both non-empty. Let
$$X:= \sum_{Q\in \atom(\D)}Q\dstext{and} Y:=\sum_{A\in\atom(\Z)}A.$$

Suppose $X<I_\D$.  Then $\pil(I_\D-X)\neq 0$, and if $A\in\atom(\Z)$,
$A\pil(I_\D-X)=0$ by part (a).  So $\pil(I_\D-X) < I_\Z-Y$; hence $Y
<I_\Z$.

Conversely, suppose $Y<I_\Z$.  Then $0\neq\bimod(I_\Z-Y)$, so there
exists $0\neq v\in\G\N(\M,\D)\cap \bimod(I_\Z-Y)$. Hence $P_v\leq(I_\Z-Y)$.

Suppose there is a $Q\in\atom(\D)$ such that $vQ\neq 0$. Let $w=vQ\in\G\N(\M,\D)$. Clearly $w\leq_\D v$, and so by Lemma \ref{lem2} we have $P_w\leq P_v$. Note that $w^*w=Q$ is in $\atom(\D)$ and hence $ww^*$ is in $\atom(\D)$. By part (b) and Proposition~\ref{prop: relation} we have that $\pil(ww^*)\pir(w^*w)$ is a non-zero projection in $\atom(\Z)$. However, as $P_w\leq\pil(ww^*)\pir(w^*w)$ and $\pil(ww^*)\pir(w^*w)$ is atomic, it follows that $P_w=\pil(ww^*)\pir(w^*w)$. Hence we have
\begin{equation*}
 \pil(ww^*)\pir(w^*w)=P_w\leq P_v\leq(I_\Z-Y).
\end{equation*}
This is a contradiction. Hence $vQ=0$ for every $Q\in\atom(\D)$. As $v$ is non-zero it follows that $X< I_\D$.
\end{proof}

The following description of $R_a$ is an immediate consequence of
Propositions~\ref{prop: relation} and~\ref{atomic}.
\begin{corollary}\label{eqrelchar}
\[R_a=\{(Q_1,Q_2)\in\atom(\D)\times\atom(\D): Q_1\M Q_2\neq (0)\}.\]
\end{corollary}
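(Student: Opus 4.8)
The plan is to derive Corollary~\ref{eqrelchar} purely by unwinding definitions and citing the two preceding results. Recall that by definition $(Q_1,Q_2)\in R_a$ exactly when $Q_1,Q_2\in\atom(\D)$ and $Q_1\sim Q_2$, i.e.\ $Q_1$ and $Q_2$ are Murray--von Neumann equivalent projections of $\M$. So the statement to prove is: for $Q_1,Q_2\in\atom(\D)$, one has $Q_1\sim Q_2$ if and only if $Q_1\M Q_2\neq(0)$.

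First I would handle the forward implication. If $Q_1\sim Q_2$, then there is a partial isometry $v\in\M$ with $vv^*=Q_1$ and $v^*v=Q_2$; in particular $v\neq 0$ (since $Q_1,Q_2$ are nonzero atoms), and $v=Q_1vQ_2\in Q_1\M Q_2$, so $Q_1\M Q_2\neq(0)$. (This is exactly the argument already used at the start of part~(b) of the proof of Proposition~\ref{atomic}, so one could simply quote it.) For the reverse implication, suppose $Q_1\M Q_2\neq(0)$. By Proposition~\ref{prop: relation} (the equivalence of~(c) and~(b)), there is a nonzero $v\in\G\N(\M,\D)$ with $vv^*\leq Q_1$ and $v^*v\leq Q_2$. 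Since $Q_1$ and $Q_2$ are \emph{atoms} of $\D$ and $vv^*,v^*v\in\D$ are nonzero, we must have $vv^*=Q_1$ and $v^*v=Q_2$. Thus $v$ implements a Murray--von Neumann equivalence $Q_1\sim Q_2$, so $(Q_1,Q_2)\in R_a$.

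Putting the two directions together gives
\[
R_a=\{(Q_1,Q_2)\in\atom(\D)\times\atom(\D): Q_1\sim Q_2\}
=\{(Q_1,Q_2)\in\atom(\D)\times\atom(\D): Q_1\M Q_2\neq(0)\},
\]
which is the claim. There is essentially no obstacle here: the corollary is a genuine corollary, and all the real work has been done in Proposition~\ref{prop: relation} (which supplies the normalizer $v$ from the mere nonvanishing of $Q_1\M Q_2$) and in the observation — already recorded in the excerpt just before Proposition~\ref{summand}, and reused in Proposition~\ref{atomic}(b) — that a partial isometry with atomic initial and final projections in $\D$ automatically lies in $\G\N(\M,\D)$. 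The only point requiring a word of care is the step "$vv^*\leq Q_1$ and $vv^*\neq 0$ with $Q_1$ an atom forces $vv^*=Q_1$", which is immediate from the definition of an atom of an abelian von Neumann algebra; I would state it in one line rather than belabor it. Note also that Proposition~\ref{atomic} is not strictly needed for the proof of this corollary — only Proposition~\ref{prop: relation} is — so the phrase "an immediate consequence of Propositions~\ref{prop: relation} and~\ref{atomic}" is slightly generous; I would either keep it as a pointer to the surrounding circle of ideas or trim it to cite only Proposition~\ref{prop: relation}.
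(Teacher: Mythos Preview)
Your proposal is correct and is precisely the natural unpacking of the paper's one-line justification; the paper gives no explicit proof beyond citing Propositions~\ref{prop: relation} and~\ref{atomic}, and your argument is exactly how one would flesh that out. Your observation that only Proposition~\ref{prop: relation} (together with the atom property) is genuinely needed is accurate and a nice sharpening of the citation.
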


\section{Main Result}\label{sec: main result} In this section, we
prove our main result, Theorem~\ref{samebbimod}.  This result shows
when the Cartan pair $(\M,\D)$ has separable predual, the isomorphism
class of the family of Bures-closed bimodules for $(\M,\D)$ depends
mostly upon the atomic part $(\M_a,\D_a)$ of $(\M,\D)$.

\begin{remark}{Notation}  If $S$ is any set, $\size(S)$ will denote
 the cardinality of $S$.
\end{remark}

\begin{lemma}\label{size} Let $(\M,\D)$ be a Cartan inclusion.  Then
$\size(R_a)=\size(\atom(\Z))$.
\end{lemma}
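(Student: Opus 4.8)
The plan is to exhibit an explicit bijection between $R_a$ and $\atom(\Z)$ using Proposition~\ref{atomic}. Recall that $R_a \subseteq \atom(\D) \times \atom(\D)$ consists of pairs $(Q_1, Q_2)$ with $Q_1 \sim Q_2$, and by Corollary~\ref{eqrelchar} this is the same as the set $\{(Q_1, Q_2) \in \atom(\D) \times \atom(\D) : Q_1 \M Q_2 \neq (0)\}$. The natural candidate for the bijection is the map
\[
\Phi : R_a \to \atom(\Z), \qquad \Phi(Q_1, Q_2) = \pil(Q_1)\pir(Q_2).
\]

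First I would check that $\Phi$ is well-defined, i.e.\ lands in $\atom(\Z)$: this is exactly Proposition~\ref{atomic}(b), which says that whenever $Q_1, Q_2 \in \atom(\D)$ and $Q_1 \sim Q_2$, the projection $\pil(Q_1)\pir(Q_2)$ is an atom of $\Z$. Next, surjectivity of $\Phi$ is precisely Proposition~\ref{atomic}(a): every $A \in \atom(\Z)$ has the form $A = \pil(Q_1)\pir(Q_2)$ for some $Q_1, Q_2 \in \atom(\D)$ with $Q_1 \sim Q_2$, so $A = \Phi(Q_1, Q_2)$ with $(Q_1, Q_2) \in R_a$. Finally, injectivity: the uniqueness clause in Proposition~\ref{atomic}(a) says that the pair $(Q_1, Q_2)$ with $A = \pil(Q_1)\pir(Q_2)$ is uniquely determined by $A$; hence if $\Phi(Q_1, Q_2) = \Phi(Q_1', Q_2')$ then $(Q_1, Q_2) = (Q_1', Q_2')$. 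Putting these three facts together, $\Phi$ is a bijection, so $\size(R_a) = \size(\atom(\Z))$.

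I do not anticipate any serious obstacle here — the lemma is essentially a repackaging of Proposition~\ref{atomic}(a,b) as a cardinality statement, and all three of the required properties (well-definedness, surjectivity, injectivity) are read off directly from the two parts of that proposition. The only point that needs a word of care is that one should phrase the argument so as not to assume $\atom(\Z)$ or $\atom(\D)$ is nonempty; but if one of them is empty, then by Proposition~\ref{atomic}(a,b) both are, and then $R_a = \emptyset$ as well (a pair $(Q_1,Q_2) \in R_a$ would force $\atom(\D) \neq \emptyset$), so the equality of cardinalities holds trivially. Thus the whole proof is a short, clean deduction from the already-established classification of $\atom(\Z)$.
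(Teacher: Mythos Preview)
Your proof is correct and uses the same bijection $(Q_1,Q_2)\mapsto \pil(Q_1)\pir(Q_2)$ that the paper constructs, relying on Proposition~\ref{atomic}(a,b) in exactly the way you describe. Your presentation is in fact more direct: the paper routes the argument through the quotient $\atom(\D)/R_a$, partitioning both $R_a$ and $\atom(\Z)$ into fibers over equivalence classes and matching them fiberwise, but this decomposition is unnecessary since well-definedness, surjectivity, and injectivity of the global map follow immediately from Proposition~\ref{atomic} as you note.
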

\begin{proof} Let $q:\atom(\D)\rightarrow \atom(\D)/R_a$ be the
  quotient map.  Define a map $\phi:\atom(\Z)\rightarrow \atom(\D)/R_a$
as follows.  For $A\in\atom(\Z)$, let $Q_1, Q_2\in\atom(\D)$ be the
unique atoms of $\D$ such that $A=\pil(Q_1)\pir(Q_2)$, see
Proposition~\ref{atomic}.  Now set $\phi(A)=q(Q_1)$. 

Observe that $\phi$ is onto: if $x\in\atom(\D)/R_a$ and $Q\in
q^{-1}(x)$, then $A:=\pil(Q)\pir(Q)\in\atom(Z)$ (see
Proposition~\ref{atomic}) and $\phi(A)=x$.

Fixing $x\in\atom(\D)/R_a$, Proposition~\ref{atomic} implies that
there is a bijection
\[\alpha_x: q^{-1}(x)\times q^{-1}(x)\rightarrow \phi^{-1}(x),\] where
$\alpha_x$ is the map given by $q^{-1}(x)\times q^{-1}(x)\ni (Q_1,
Q_2)\mapsto \pil(Q_1)\pir(Q_2)\in \phi^{-1}(x)$.  Since $\atom(\Z)$
and $R_a$ are the disjoint unions,
\[\atom(\Z)=\bigcup_{x\in \atom(\D)/R_a} \phi^{-1}(x)\dstext{and}
R_a=\bigcup_{x\in \atom(\D)/R_a} (q^{-1}(x)\times q^{-1}(x)),\] there
exists a bijection
\[\alpha: R_a\rightarrow \atom(\Z)\] given by
$\alpha(Q_1,Q_2)=\alpha_x(Q_1,Q_2)$ if $(Q_1,Q_2)\in q^{-1}(x)\times
q^{-1}(x).$

\end{proof}

For the following result we restrict our attention to the separably
acting case.

\begin{theorem}\label{samebbimod} For $i=1,2$, let $(\M_i,\D_i)$ be
Cartan pairs where $\M_i$ has separable predual, and let $\fS_i$ be
the lattice of all Bures-closed $\D_i$-bimodules contained in $\M_i$.
The following statements are equivalent.
\begin{enumerate}
\item There is a lattice isomorphism $\alpha$ of $\fS_1$ onto $\fS_2$.
\item There is a lattice isomorphism $\alpha'$ from the projection
lattice of $\Z_1$ onto the projection lattice of $\Z_2$.
\item There is a von Neumann algebra isomorphism $\Theta$ of $\Z_1$
onto $\Z_2$.
\item The atomic relations $R_{a,i}$ for $(\M_i,\D_i)$ satisfy
$\size(R_{a,1})=\size(R_{a,2}).$
\end{enumerate}
\end{theorem}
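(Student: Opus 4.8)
The plan is to prove the four conditions mutually equivalent by establishing the three pairings (a)$\,\Leftrightarrow\,$(b), (b)$\,\Leftrightarrow\,$(c), and (c)$\,\Leftrightarrow\,$(d) separately, since each rests on a different set of tools. For (a)$\,\Leftrightarrow\,$(b) I would first record that, for each $i$, the assignment $\S\mapsto\supp(\S)$ is an isomorphism of complete lattices from $\fS_i$ onto the projection lattice $\operatorname{Proj}(\Z_i)$, with inverse $Q\mapsto\bimod(Q)$. That $\bimod(Q)$ is always Bures-closed and that $\bimod(\supp(\S))=\S$ for Bures-closed $\S$ is \cite[Lemma~2.1.4 and Theorem~2.5.1]{CameronPittsZarikianCaMASAvNAlNoNePrMeTh}; one also checks $\supp(\bimod(Q))=Q$, using that every nonzero Bures-closed bimodule is generated by the normalizers it contains, so that every projection of $\Z_i$ is a join of supports $P_w$. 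Since $\supp$ and $\bimod$ are order preserving and mutually inverse, they preserve arbitrary joins and meets. Consequently a lattice isomorphism $\fS_1\to\fS_2$ conjugates, through these identifications, to one from $\operatorname{Proj}(\Z_1)$ onto $\operatorname{Proj}(\Z_2)$, and conversely.

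For (b)$\,\Leftrightarrow\,$(c): the implication (c)$\,\Rightarrow\,$(b) is trivial, since a $*$-isomorphism restricts to an isomorphism of projection lattices. For (b)$\,\Rightarrow\,$(c) I would use that $\Z_i$ is abelian, so $\operatorname{Proj}(\Z_i)$ is a complete Boolean algebra in which complementation is determined by the lattice operations and the extreme elements; hence any lattice isomorphism $\operatorname{Proj}(\Z_1)\to\operatorname{Proj}(\Z_2)$ is automatically a complete Boolean-algebra isomorphism, and such an isomorphism between the projection lattices of two abelian von Neumann algebras lifts to a $*$-isomorphism of the algebras, since each algebra is recovered up to $*$-isomorphism from its complete Boolean algebra of projections, e.g. as the continuous functions on the latter's (hyperstonean) Stone space. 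Thus $\Z_1\cong\Z_2$.

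For (c)$\,\Leftrightarrow\,$(d): if $\Theta\colon\Z_1\to\Z_2$ is a $*$-isomorphism it carries $\atom(\Z_1)$ bijectively onto $\atom(\Z_2)$, so $\size(\atom(\Z_1))=\size(\atom(\Z_2))$, and Lemma~\ref{size} turns this into $\size(R_{a,1})=\size(R_{a,2})$. Conversely assume (d); here the separability hypothesis is essential, since $\Z_i$ then has separable predual, and a separably acting abelian von Neumann algebra is determined up to $*$-isomorphism by its atomic data: it is $\ell^\infty$ on its (at most countable) set of atoms, in direct sum with a copy of $L^\infty([0,1],m)$ precisely when it has a nonzero diffuse summand. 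By Lemma~\ref{size} the cardinal $\size(\atom(\Z_i))$ equals $\size(R_{a,i})$, so these agree under (d); and by Proposition~\ref{atomic}(c) whether $\Z_i$ has a diffuse summand is governed by whether $\D_i$ does, which is tracked alongside the atomic count. Matching the two pieces of data yields $\Z_1\cong\Z_2$.

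The hard part will be (d)$\,\Rightarrow\,$(c): the first two pairings are essentially formal, built on the reflexivity theorem and on standard facts about Boolean algebras and abelian von Neumann algebras, whereas here one must upgrade a bare equality of cardinalities of the atomic equivalence relations into an honest $*$-isomorphism of the $\Z_i$. That is exactly where Lemma~\ref{size} is needed (to convert $R_a$ into $\atom(\Z)$), where separability is unavoidable (to invoke the classification of abelian von Neumann algebras), and where Proposition~\ref{atomic}(c) enters to keep the continuous parts aligned; I expect the careful bookkeeping of the diffuse summand to be the principal subtlety.
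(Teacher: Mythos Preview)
Your outline matches the paper's proof closely: the paper invokes \cite[Theorem~2.5.8]{CameronPittsZarikianCaMASAvNAlNoNePrMeTh} directly for (a)$\Leftrightarrow$(b) (this packages the $\supp$/$\bimod$ bijection you spell out), sketches the same folklore for (b)$\Leftrightarrow$(c) via an inductive limit of finite Boolean subalgebras rather than your Stone-space phrasing, and argues (c)$\Leftrightarrow$(d) from Lemma~\ref{size}, Proposition~\ref{atomic}(c), and the classification of separably acting abelian von Neumann algebras, exactly as you propose.

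Your worry about (d)$\Rightarrow$(c) is justified, and the phrase ``tracked alongside the atomic count'' hides a genuine gap that the paper's own proof shares. Hypothesis (d) records only $\size(R_{a,i})$, and this alone does not determine whether $\Z_i$ has a nonzero diffuse summand: take $(\M_1,\D_1)=(\bbC,\bbC)$ versus $(\M_2,\D_2)=(\bbC\oplus\N,\,\bbC\oplus\D_\N)$ for any separable non-atomic Cartan pair $(\N,\D_\N)$; both satisfy $\size(R_{a,i})=1$, yet $\Z_1\cong\bbC$ while $\Z_2\cong\bbC\oplus L^\infty[0,1]$. Proposition~\ref{atomic}(c) ties the diffuse part of $\Z_i$ to that of $\D_i$, but gives no way to read either off from $\size(R_{a,i})$. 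The paper's introduction in fact lists \emph{two} invariants---whether $\D$ has a diffuse part, together with $\size(R_a)$---so condition (d) as stated appears to be missing the first clause; you should not expect to close this step without adding it.
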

\begin{proof} The equivalence of (a) and (b) follows directly
  from~\cite[Theorem~2.5.8]{CameronPittsZarikianCaMASAvNAlNoNePrMeTh}.
  The equivalence of (b) and (c) is a piece of folklore about abelian
  von Neumann algebras.  (Here is a sketch of the non-trivial
  direction.  Suppose $\alpha'$ is an isomorphism of the projection
  lattices.  For every finite Boolean algebra, $\A\subseteq
  \text{proj}(\Z_1)$, $\alpha'|_\A$ extends uniquely to a \cstaralg\
  isomorphism $\Theta_\A$ of $C^*(\A)$ onto $C^*(\alpha'(\A))$.  As
  $\Z_1$ is the \cstar-inductive limit of the family $\{C^*(\A): \A
  \text{ a finite Boolean algebra of proj$(\Z_1)$}\}$ (with inclusion
  maps), the inductive limit $\Theta$ of the maps $\Theta_\A$ is an
  isomorphism of $\Z_1$ onto $\Z_2$.  But every isomorphism between
  von Neumann algebras is weak-$*$ continuous, so $\Z_1$ and $\Z_2$
  are isomorphic von Neumann algebras.)

If (c) holds, then $\atom(\Z_1)$ and $\atom(\Z_2)$ have the same
cardinality, so Lemma~\ref{size} implies (d) holds.  Conversely, if
(d) holds, then the atomic parts of $\Z_1$ and $\Z_2$ are isomorphic.
As $\Z_i$ are MASAs acting on separable Hilbert spaces,
Proposition~\ref{atomic}(c) implies the continuous parts of $\Z_1$ and
$\Z_2$ are isomorphic.  Therefore, $\Z_1$ is unitarily equivalent to
$\Z_2,$ and (c) holds.
\end{proof}

Theorem~\ref{samebbimod} is perhaps initially most remarkable when we
consider von Neumann algebras without atoms. For example, let
$(\M_1,\D_1)$ and $(\M_2,\D_2)$ be Cartan pairs where $\M_1$ is of
type II$_1$ and $\M_2$ is of type III$_\lambda$ for some
$\lambda$. Theorem~\ref{samebbimod} tells us that, while $\M_1$ and
$\M_2$ are quite different as von Neumann algebras, the lattice
of Bures-closed $\D_1$-bimodules of $\M_1$ is  isomorphic to the lattice of
Bures-closed $\D_2$-bimodules of $\M_2$. The following simple example
illustrates the situation regarding the atoms in
Theorem~\ref{samebbimod}.

\begin{remark}{Example} For any $n\in\bbN$, let $D_n\subseteq
M_n(\bbC)$ be the set of diagonal $n\times n$ matricies.  Let
$(\M_1,\D_1)=(M_2(\bbC), D_2)$, and let $(\M_2,\D_2)=(D_4,D_4)$.  Then
$\Z_1$ is isomorphic to $D_4$, so these Cartan pairs have isomorphic
lattices of bimodules.
\end{remark}

\begin{remark}{Remark} The non-separable case is complicated by the
fact that there are many isomorphism classes of non-atomic abelian von
Neumann algebras.  Indeed, if $\H$ is non-separable and $\D\subseteq
\bh$ is a non-atomic MASA with a unit cyclic vector $\xi$, then there
is a countable set $I$ such that $\D$ is isomorphic to the direct sum,
$\bigoplus_{i\in I} L^\infty(X_i,\mu_i)$, where $X_i=[0,1]^{A_i}$ is
a Cartesian product of the unit interval, $\mu_i$ is product
measure, and for at least one $i$, $A_i$ is a set with
$\size(A_i)>\aleph_0$ (see \cite{MaharamOnHoMeAl} and
\cite{SegalEqMeSp}).  A general MASA decomposes into a direct sum of
cyclic MASAs, hence there is a family $\{Q_\alpha\}_{\alpha\in \fI}$
of projections in $\D$, for which $Q_\alpha\D$ is isomorphic to
$L^\infty([0,1]^{A_\alpha})$.  Since $Q_\alpha$ is not minimal, the
arguments of Proposition~\ref{atomic} do not seem to apply, and it is
not clear how the statement of Theorem~\ref{samebbimod} should be
modified in the non-separable case.
\end{remark}

\def\cprime{$'$}

%\bibliographystyle{acm} \bibliography{david}  Use the acm
%bibliography style to obtain a references section which is close to
%the NYJM request for format of references section.

\def\cprime{$'$}
\providecommand{\bysame}{\leavevmode\hbox to3em{\hrulefill}\thinspace}
\providecommand{\MR}{\relax\ifhmode\unskip\space\fi MR }
\providecommand{\MRhref}[2]{%
  \href{http://www.ams.org/mathscinet-getitem?mr=#1}{#2}
}
\providecommand{\href}[2]{#2}

\end{document}